\newcommand{\pc}{\mathbf{P}}
\newcommand{\pp}{\mathbb{P}}
\begin{document}
\setcounter{page}{1}

\title{On Deductive Systems of AC Semantics for Rough Sets}
\author{\textbf{A. Mani}}
\institute{Department of Pure Mathematics\\
University of Calcutta\\
9/1B, Jatin Bagchi Road\\
Kolkata(Calcutta)-700029, India\\
Email: \email{a.mani.cms@gmail.com}\\
Homepage: \url{http://www.logicamani.in}}

\maketitle

\begin{abstract}
Antichain based semantics for general rough sets were introduced recently by the present author. In her paper two different semantics, one for general rough sets and another for general approximation spaces over quasi-equivalence relations, were developed. These semantics are improved and studied further from a lateral algebraic logic perspective in this research. The main results concern the structure of the algebras and deductive systems in the context.  

\begin{keywords}
Rough Objects, Granular operator Spaces, Maximal Antichains, Algebraic Logic, Granular Rough Semantics, Deductive Systems 
\end{keywords}
\end{abstract}

\section{Introduction}

It is well known that sets of rough objects (in various senses) are quasi or partially orderable. Specifically in classical or Pawlak rough sets \cite{PPM2}, the set of roughly equivalent sets has a Quasi Boolean order on it while the set of rough and crisp objects is Boolean ordered.  In the classical semantic domain or classical meta level, associated with general rough sets, the set of crisp and rough objects is quasi or partially orderable. Under minimal assumptions on the nature of these objects, many orders with rough ontology can be  associated - these necessarily have to do with concepts of discernibility. Concepts of rough objects, in these contexts, depend additionally on approximation operators and granulations used. These were part of the motivations of the development of the concept of granular operator spaces by the present author in \cite{AM6999}. 

In quasi or partially ordered sets, sets of mutually incomparable  elements are called \emph{antichains} (for basics see\cite{DP2002,koh}). The possibility of using antichains of rough objects for a possible semantics was mentioned in \cite{AM6000,AM9501,AM3690} by the present author and developed in \cite{AM6999}. The semantics is applicable for a large class of operator based rough sets including specific cases of \textsf{RYS} \cite{AM240} and other general approaches like \cite{CD3}.

The semantics of \cite{AM6999}, as improved in the present paper, is very general, open ended, extendable and optimal for lateral studies.  In the same framework, the machinery for isolation of deductive systems is developed  and studied from a purely algebraic logic point of view. New results on representation are also proved. 

\subsection*{Background}

Let $S$ be any set and $l, u$ be lower and upper approximation operators on $\mathcal{S}\subseteq \wp (S)$ that satisfy monotonicity and $(\forall A\subseteq S)\, A\subseteq A^u$. An element $A\in\mathcal{S}$ will be said to be \emph{lower definite} (resp. \emph{upper definite}) if and only if $A^l = A$ (resp. $A^u = A$) and \emph{definite}, when it is both lower and upper definite. For possible concepts of rough objects \cite{AM6999,AM240} may be consulted. \textsf{Finiteness of $S$ and granular operator spaces, defined below, will be assumed (though not always essential) unless indicated otherwise}.  

Set framework with operators will be used as all considerations will require quasi orders in an essential way. The evolution of the operators need not be induced by a cover or a relation (corresponding to cover or relation based systems respectively), but these would be special cases. The generalization to some rough Y-systems \textsf{RYS} (see \cite{AM240} for definitions), will of course be possible as a result. 

\begin{definition}[\cite{AM6999}]
A \emph{Granular Operator Space} $S$ will be a structure of the form $S\,=\, \left\langle \underline{S}, \mathcal{G}, l , u\right\rangle$ with $\underline{S}$ being a set, $\mathcal{G}$ an \emph{admissible granulation}(defined below) over $S$ and $l, u$ being operators $:\wp(\underline{S})\longmapsto \wp(\underline{S})$ satisfying the following ($\underline{S}$ will be replaced with $S$ if clear from the context. Lower case alphabets will often be used for subsets ):

\begin{align*}
A^l \subseteq A\,\&\,A^{ll} = A^l \,\&\, A^{u} \subset A^{uu}  \\
(A\subseteq B \longrightarrow A^l \subseteq B^l \,\&\,A^u \subseteq B^u)\\
\emptyset^l = \emptyset \,\&\,\emptyset^u = \emptyset \,\&\,\underline{S}^{l}\subseteq S \,\&\, \underline{S}^{u}\subseteq S.
\end{align*}

Here, \textsf{Admissible granulations} are granulations $\mathcal{G}$ that satisfy the following three conditions (Relative \textsf{RYS} \cite{AM240}, $\pc = \subseteq$, $\pp = \subset$) and $t$ is a term operation formed from set operations):

\begin{align*}
(\forall x \exists
y_{1},\ldots y_{r}\in \mathcal{G})\, t(y_{1},\,y_{2}, \ldots \,y_{r})=x^{l} \\
\tag{Weak RA, WRA} \mathrm{and}\: (\forall x)\,(\exists
y_{1},\,\ldots\,y_{r}\in \mathcal{G})\,t(y_{1},\,y_{2}, \ldots \,y_{r}) =
x^{u},\\
\tag{Lower Stability, LS}{(\forall y \in
\mathcal{G})(\forall {x\in \underline{S} })\, ( y\subseteq x\,\longrightarrow\, y \subseteq (x^{l})),}\\
\tag{Full Underlap, FU}{(\forall
x,\,y\in\mathcal{G})(\exists
z\in \underline{S} )\, x\subset z,\,y \subset z\,\&\,z^{l} = z^{u} = z,}
\end{align*}
\end{definition}

In the present context, these conditions mean that every approximation is somehow representable by granules, that granules are lower definite, and that all pairs of distinct granules are contained in definite objects.

On $\wp(\underline{S})$, the relation $\sqsubset$ is defined by \[A \sqsubset B \text{ if and only if } A^l \subseteq B^l \,\&\, A^u \subseteq B^u.\] The rough equality relation on $\wp(\underline{S})$ is defined via $A\approx B \text{ if and only if } A\sqsubset B  \, \&\,B \sqsubset A$. 

Regarding the quotient $\underline{S}|\approx$ as a subset of $\wp(\underline{S})$, the order $\Subset$ will be defined as per \[\alpha \Subset \beta \text{ if and only if } \alpha^l \subseteq \beta^l \,\&\, \alpha^u \subseteq \beta^u.\] Here $\alpha^l$ is being interpreted as the lower approximation of $\alpha$ and so on. $\Subset$ will be referred to as the \emph{basic rough order}.

\begin{definition}
By a \emph{roughly consistent object} will be meant a set of subsets of $\underline{S}$ of the form  $H = \{A ; (\forall B\in H)\,A^l =B^l, A^u = B^u \}$. The set of all roughly consistent objects is partially ordered by the inclusion relation. Relative this maximal roughly consistent objects will be referred to as \emph{rough objects}. By \emph{definite rough objects}, will be meant rough objects of the form $H$ that satisfy 
\[(\forall A \in H) \, A^{ll} = A^l \,\&\, A^{uu} = A^{u}. \] 
\end{definition}

\begin{proposition}
$\Subset$ is a bounded partial order on $\underline{S}|\approx$. 
\end{proposition}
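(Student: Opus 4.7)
The plan is to check the three poset axioms for $\Subset$ and then exhibit top and bottom elements, with the preliminary (and really only substantive) observation being that $\Subset$ is well-defined on $\approx$-classes.

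First, I would verify well-definedness. By the definition of $\approx$, if $A \approx A'$ and $B \approx B'$ then $A^l = A'^l$, $A^u = A'^u$, $B^l = B'^l$, $B^u = B'^u$, so $A^l \subseteq B^l \,\&\, A^u \subseteq B^u$ holds iff $A'^l \subseteq B'^l \,\&\, A'^u \subseteq B'^u$. Hence $\Subset$ on $\underline{S}|\approx$ is determined independently of chosen representatives, and writing $\alpha^l, \alpha^u$ for the common $l$- and $u$-images of any representative is legitimate.

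Next I would dispatch the order axioms. Reflexivity is immediate since $\alpha^l \subseteq \alpha^l$ and $\alpha^u \subseteq \alpha^u$. Transitivity follows from transitivity of $\subseteq$ coordinatewise. For antisymmetry, suppose $\alpha \Subset \beta$ and $\beta \Subset \alpha$; then $\alpha^l = \beta^l$ and $\alpha^u = \beta^u$, so for any $A \in \alpha$ and $B \in \beta$ we have $A \sqsubset B$ and $B \sqsubset A$, i.e., $A \approx B$, forcing $\alpha = \beta$ as classes.

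For boundedness, I would take the candidates $\mathbf{0} = [\emptyset]_\approx$ and $\mathbf{1} = [\underline{S}]_\approx$. Since $\emptyset^l = \emptyset^u = \emptyset$, for every class $\alpha$ with representative $A$ we trivially have $\emptyset \subseteq A^l$ and $\emptyset \subseteq A^u$, giving $\mathbf{0} \Subset \alpha$. For the top, monotonicity of $l, u$ gives $A^l \subseteq \underline{S}^l$ and $A^u \subseteq \underline{S}^u$ for every $A \subseteq \underline{S}$, so $\alpha \Subset \mathbf{1}$. The only mild obstacle worth noting is that one must rely on the granular operator space axioms $\emptyset^l = \emptyset^u = \emptyset$ and monotonicity to pin down the bounds; everything else is formal. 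No step appears to require more than direct unfolding of definitions, so the argument is essentially bookkeeping.
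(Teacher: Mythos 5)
Your proof is correct and is exactly the direct verification one would expect: well-definedness on $\approx$-classes, the three order axioms coordinatewise from $\subseteq$, and the bounds $[\emptyset]_\approx$ and $[\underline{S}]_\approx$ pinned down by the axioms $\emptyset^l=\emptyset^u=\emptyset$ and monotonicity. The paper states this proposition without proof, so there is no competing argument to compare against; your write-up fills that gap adequately and uses nothing beyond the granular operator space axioms.
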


\begin{theorem}
\begin{itemize}
\item {The poset $AC_m (X)$ of all maximum sized antichains of a poset $X$ is a distributive lattice.}
\item {For every finite distributive lattice $L$ and every chain decomposition ${C}$ of $J_L$ (the set of join irreducible elements of $L$), there is a poset $X_C$ such that $L\cong AC_m(X_C)$. }
\end{itemize} 
\end{theorem}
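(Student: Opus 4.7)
For part (i) I plan to order $AC_m(X)$ by $A \leq B$ if and only if $\downarrow A \subseteq \downarrow B$, where $\downarrow A = \{z \in X : z \leq a \text{ for some } a \in A\}$ is the order ideal generated by $A$, and to define the meet and join of $A, B \in AC_m(X)$ as the antichains of maximal elements of $\downarrow A \cap \downarrow B$ and $\downarrow A \cup \downarrow B$ respectively. Distributivity is then inherited from the distributive lattice of order ideals of $X$ via the embedding $A \mapsto \downarrow A$, so the whole difficulty lies in showing these candidates have maximum cardinality.

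Invoking Dilworth's theorem I would fix a chain partition $X = C_1 \sqcup \cdots \sqcup C_k$ with $k = \mathrm{width}(X)$. Any $A \in AC_m(X)$ meets each $C_i$ in exactly one element $a_i$, and the antichain property forces $\downarrow A \cap C_i$ to coincide with the initial segment of $C_i$ up to $a_i$: an element of $C_i$ strictly above $a_i$ but below some $a' \in A$ on another chain would yield $a_i < a'$, contradicting the antichain hypothesis. Hence $(\downarrow A \cup \downarrow B) \cap C_i$ is the initial segment up to $\max(a_i, b_i)$, and I would argue that $\{\max(a_i, b_i) : i = 1, \ldots, k\}$ is simultaneously an antichain and globally maximal in $\downarrow A \cup \downarrow B$; comparisons between $b_i$ and $a_j$ for $i \neq j$ lead to contradictions against the antichain property of $A$ or $B$. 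The meet is handled symmetrically.

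For part (ii), Birkhoff's representation gives $L \cong \mathcal{O}(J_L)$, the lattice of order ideals of $J_L$, so it suffices to build $X_C$ with $AC_m(X_C) \cong \mathcal{O}(J_L)$. Writing $C_i = \{c_{i,1} < \cdots < c_{i,n_i}\}$, I take $X_C = \{x_{i,p} : 1 \leq i \leq k,\, 0 \leq p \leq n_i\}$ ordered by the transitive closure of $x_{i,p} < x_{i,p+1}$ (forming chains $\tilde C_i$ of length $n_i + 1$) together with cross-chain edges $x_{i,p} < x_{j,q}$ whenever $i \neq j$ and some $s > p$ satisfies $c_{i,s} \leq c_{j,q}$ in $J_L$. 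Since the $\tilde C_i$ cover $X_C$ and $\{x_{1,0}, \ldots, x_{k,0}\}$ is an antichain, the width of $X_C$ equals $k$. The candidate isomorphism $\Phi : \mathcal{O}(J_L) \to AC_m(X_C)$ sends $I$ to $\{x_{i, j_i(I)}\}_{i=1}^k$ with $j_i(I) = |I \cap C_i|$; the key equivalence I need to verify is that $\Phi(I)$ is an antichain if and only if $I$ is downward closed, both conditions reducing to the statement that for every $i \neq j$ and every $c_{j,s} \leq c_{i, j_i(I)}$ in $J_L$ one has $s \leq j_j(I)$. Order preservation is then immediate from $\downarrow \Phi(I) = \bigcup_i \{x_{i,0}, \ldots, x_{i, j_i(I)}\}$.

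The principal obstacle is twofold. In part (i), the genuinely nontrivial claim is \emph{global} (not merely chain-wise) maximality of the candidate join elements; this requires combining both antichain hypotheses to rule out $b_i < a_j$ or $b_i > a_j$ for $i \neq j$. In part (ii), one must verify that the transitive closure used to define $X_C$ creates no unintended comparisons, either within a single $\tilde C_i$ or shrinking the width below $k$; tracing any such consequence back to an inequality $c_{i,s} \leq c_{j,q}$ in $J_L$ via the stated defining relations should dispatch this.
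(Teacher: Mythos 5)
Your proposal is essentially correct, but note that the paper itself supplies no argument for this theorem: it is quoted as a known result, with proofs deferred to the references (the first part is Dilworth's classical theorem on the lattice of maximum-sized antichains; the second is the Koh-type representation theorem parametrized by chain decompositions of $J_L$). What you have written is therefore a self-contained reconstruction of the classical route rather than an alternative to anything in the paper. For part (i) your reduction via a Dilworth chain partition is the standard one, and the two obstacles you flag do resolve: the identity $\downarrow A \cap C_i = \{c \in C_i : c \le a_i\}$ follows exactly as you say, and global maximality of $\{\max(a_i,b_i)\}$ follows because any comparability $\max(a_i,b_i) < \max(a_j,b_j)$ collapses, after replacing $\max(a_i,b_i)$ by the larger of $a_i, b_i$ and using $b_i \le \max(a_i,b_i)$ (resp. $a_i \le \max(a_i,b_i)$), to a comparability inside $A$ or inside $B$; the embedding $A \mapsto \downarrow A$ then carries join to union and meet to intersection, so distributivity is inherited. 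For part (ii) your construction is sound, and the transitive-closure worry is dispatched by observing that the generating relations compose into generating relations: if $x_{i,p} < x_{m,r}$ via $c_{i,s} \le c_{m,r}$ with $s>p$ and $x_{m,r} < x_{j,q}$ via $c_{m,t} \le c_{j,q}$ with $t>r$, then $c_{i,s} \le c_{m,r} \le c_{m,t} \le c_{j,q}$ witnesses the direct relation $x_{i,p} < x_{j,q}$; since every generating cross-relation targets some $x_{j,q}$ with $q \ge 1$, the set $\{x_{1,0},\ldots,x_{k,0}\}$ stays an antichain and the width is exactly $k$, so every maximum antichain picks one point per chain and your correspondence with order ideals is a bijection. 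The only cosmetic point worth tightening is that $\Phi$ should be described as defined on chain-wise down-closed selections $(j_1,\ldots,j_k)$, with the antichain condition on $\Phi(I)$ shown equivalent to global down-closedness of $I$, since as stated the domain $\mathcal{O}(J_L)$ already presupposes the property you then claim to characterize.
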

Links to proofs can be found in \cite{AM6999}.

\section{Deductive Systems}\label{cons}

In this section, key aspects of the approach to ternary deductive systems in \cite{IC2003,IC2001} are presented. These are intended as natural generalizations of the concepts of ideals and filters and classes of congruences that can serve as subsets or subalgebras closed under consequence operations or relations (also see \cite{FJ}).

\begin{definition}\label{sys}
Let $\mathbb{S} = \left\langle S, \Sigma \right\rangle$ be an algebra, then the set of term functions over it will be denoted by $\mathbf{T}^\Sigma (\mathbb{S})$ and the set of $r$-ary term functions by $\mathbf{T}^\Sigma_r (\mathbb{S})$. Further let 
\begin{align*}
\tag{0} g\in \mathbf{T}^\Sigma_1 (\mathbb{S}), \;z\in S , \; \tau \subset \mathbf{T}^\Sigma_3 (\mathbb{S}), \\
\tag{1} g(z)\in \Delta \subset S ,\\
\tag{2} (\forall t\in \tau )(a\in \Delta \, \&\,t (a, b, z) \in \Delta \longrightarrow b\in \Delta ), \\
\tag{3} (\forall t \in \tau)( b\in \Delta \longrightarrow t (g(z), b, z) \in \Delta ),
\end{align*}
then $\Delta$ is a $(g, z)-$\emph{$\tau$-deductive system} of $\mathbb{S}$. If further for each $k$-ary operation $f\in \Sigma$ \[ (\forall a_i, b_i \in S)(\&_{i=1}^{k}p(a_i, b_i , z)\in \Delta \longrightarrow p(f(a_1, \ldots , a_k), f(f(a_1, \ldots , b_k),x)\in \Delta),\] then $\Delta$ is said to be compatible.

$\tau$ is said to be a \emph{g-difference system} for $\mathbb{S}$ if $\tau$ is finite and the condition 
\[(\forall t\in \tau) t(a, b, c) = g(c) \text{ if and only if } a = b \text{ holds.}\]
\end{definition}

A variety $\mathcal{V}$ of algebras is regular with respect to a unary term $g$ if and only if for each $S\in \mathcal{V}$,
\[(\forall b\in S)(\forall \sigma, \rho \in con(S))([g(b)]_\sigma = [g(b)]_\rho \longrightarrow \sigma=\rho) .\]
It should be noted that in the above $\tau$ is usually taken to be a finite subset and a variety has a g-difference system if and only if it is regular with respect to $g$.

\begin{proposition}
In the above definition, it is provable that
\[(\forall t \in \tau)( t (g(z), b, z) \in \Delta \longrightarrow  b\in \Delta ).\]
\end{proposition}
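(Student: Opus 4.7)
The plan is to derive the implication directly from conditions (1) and (2) of Definition~\ref{sys} by instantiating (2) at the particular element $a := g(z)$. The only fact we need from the deductive system axioms besides (2) is that $g(z)$ itself lies in $\Delta$, which is precisely condition~(1). So the argument is essentially one line; what is worth doing in the write-up is making the substitution pattern explicit so that the reader sees the symmetry with condition~(3).

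Concretely, I would fix an arbitrary $t \in \tau$ and an arbitrary $b \in S$, and assume as hypothesis that $t(g(z), b, z) \in \Delta$. First I would invoke (1) to record $g(z) \in \Delta$. Then I would apply condition~(2) with the parameters $a := g(z)$ and the same $b$ and $z$; the two premises of (2), namely $a \in \Delta$ and $t(a, b, z) \in \Delta$, are exactly what we have just established. The conclusion of (2) is $b \in \Delta$, which is what had to be shown. Since $t$ and $b$ were arbitrary, the universally quantified statement follows.

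There is no real obstacle here; the statement is a syntactic consequence of (1) and (2), and does not use the compatibility clause, the finiteness of $\tau$, or the $g$-difference condition. The substantive content of the proposition is conceptual rather than technical: together with clause~(3), it shows that for each $t \in \tau$ the membership $t(g(z), b, z) \in \Delta$ is actually \emph{equivalent} to $b \in \Delta$. This is the feature that justifies reading the ternary terms in $\tau$ as generalised ``difference'' operations relative to the distinguished element $g(z)$, and it is the form in which the proposition will typically be used later in the paper, so the write-up should explicitly note the resulting biconditional.
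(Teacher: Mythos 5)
Your argument is correct: instantiating condition (2) at $a := g(z)$ and discharging the premise $a \in \Delta$ via condition (1) is exactly the intended one-line derivation, and the paper itself states this proposition without proof (deferring to the cited sources), so there is nothing to diverge from. Your added remark that, combined with clause (3), this yields the biconditional $t(g(z),b,z)\in\Delta \leftrightarrow b\in\Delta$ for each $t\in\tau$ is a correct and worthwhile observation.
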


\begin{definition}\label{rela}
In the context of Def.\ref{sys}, $\Theta_{Delta, z} $ shall be a relation induced on $S$ by $\tau$ as per the following 
\[(a, b) \in \Theta_{\Delta , z} \text{ if and only if } (\forall t \in \tau)\, t(a, b, z)\in \Delta .\]
\end{definition}

\begin{proposition}
In the context of Def.\ref{rela}, $\Delta = [g(z)]_{\Theta_{\Delta, z}}$.  
\end{proposition}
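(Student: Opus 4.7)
The plan is to prove the two inclusions $\Delta \subseteq [g(z)]_{\Theta_{\Delta,z}}$ and $[g(z)]_{\Theta_{\Delta,z}} \subseteq \Delta$ separately, with each inclusion following almost directly from one of the defining conditions of a $(g,z)$-$\tau$-deductive system together with the preceding proposition.

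For the inclusion $\Delta \subseteq [g(z)]_{\Theta_{\Delta,z}}$, I would take an arbitrary $b \in \Delta$. By condition (3) of Definition~\ref{sys}, for every $t \in \tau$ one has $t(g(z), b, z) \in \Delta$. Unfolding the definition of $\Theta_{\Delta,z}$ from Definition~\ref{rela}, this says exactly $(g(z), b) \in \Theta_{\Delta,z}$, so $b$ belongs to the equivalence/relation class $[g(z)]_{\Theta_{\Delta,z}}$.

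For the reverse inclusion $[g(z)]_{\Theta_{\Delta,z}} \subseteq \Delta$, I would take $b$ with $(g(z),b) \in \Theta_{\Delta,z}$, which by Definition~\ref{rela} means $t(g(z), b, z) \in \Delta$ for every $t \in \tau$. Here the previous proposition is applied directly: it gives $b \in \Delta$. (Note that proposition is itself an immediate consequence of condition (2) applied with $a := g(z)$, using the hypothesis $g(z) \in \Delta$ from (1).)

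The argument is essentially a bookkeeping exercise, so I do not expect any genuine obstacle; the only subtlety worth flagging is that $\Theta_{\Delta,z}$ need not a priori be an equivalence relation, so the notation $[g(z)]_{\Theta_{\Delta,z}}$ must be read as the set $\{b \in S : (g(z), b) \in \Theta_{\Delta,z}\}$, and both inclusions above are verified against precisely this reading. No use is made of compatibility of $\Delta$ or of any $g$-difference system property, so the statement holds for every $(g,z)$-$\tau$-deductive system in the sense of Definition~\ref{sys}.
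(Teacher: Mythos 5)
Your proof is correct: the inclusion $\Delta \subseteq [g(z)]_{\Theta_{\Delta,z}}$ is exactly condition (3) of Definition~\ref{sys}, and the reverse inclusion is the preceding proposition (itself condition (2) with $a = g(z)$ and $g(z)\in\Delta$ from (1)), with the class $[g(z)]_{\Theta_{\Delta,z}}$ read as $\{b : (g(z),b)\in\Theta_{\Delta,z}\}$ since $\Theta_{\Delta,z}$ need not be an equivalence. The paper states this proposition without proof (it is recalled from the cited sources on ternary deductive systems), and your argument is precisely the routine verification that is being left implicit, so there is nothing to object to beyond the degenerate case $\tau=\emptyset$, which the paper also ignores.
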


\begin{proposition}
Let $\tau\subset T_{3}^{\Sigma}(\mathbb{S})$ with the algebra $\mathbb{S} = \left\langle S, \Sigma \right\rangle$, $v\in T_{1}^{\Sigma}(\mathbb{S})$, $e\in S$, $K \subseteq S$ and let $\Theta_{K, e}$ be induced by $\tau$. If $\Theta_{K, e}$ is a reflexive and transitive  relation such that $K = [v(e)]_{Theta_{K, e}}$ , then $K$ is a $(v, e)$- $\tau$-deductive system of $\mathbb{S}$.
\end{proposition}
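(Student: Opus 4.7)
The plan is to verify conditions (1)--(3) of Def.~\ref{sys} for $K$ directly, using only that $\Theta_{K,e}$ is reflexive and transitive and that $K$ coincides with the $\Theta_{K,e}$-class of $v(e)$. No structural assumption on $\tau$ or on the algebra beyond what is already given will be needed.

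Condition (1), $v(e) \in K$, is immediate from reflexivity: $(v(e), v(e)) \in \Theta_{K,e}$, so $v(e) \in [v(e)]_{\Theta_{K,e}} = K$. Condition (3) follows just as quickly. For any $b \in K$ and any $t \in \tau$, membership $b \in [v(e)]_{\Theta_{K,e}}$ unpacks, via Def.~\ref{rela}, to $s(v(e), b, e) \in K$ for every $s \in \tau$, so in particular $t(v(e), b, e) \in K$.

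The nontrivial step is condition (2), and this is where transitivity is used. The quantifier $(\forall t \in \tau)$ in (2) is to be read as scoping over the entire antecedent of the implication --- the same scoping that is used to define $\Theta_{K,e}$ in Def.~\ref{rela} --- so that the assumption to dispatch is ``$a \in K$ and $t(a, b, e) \in K$ for every $t \in \tau$.'' The second half is exactly $(a, b) \in \Theta_{K,e}$, and the first gives $(v(e), a) \in \Theta_{K,e}$. Transitivity then delivers $(v(e), b) \in \Theta_{K,e}$, hence $b \in [v(e)]_{\Theta_{K,e}} = K$.

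The main --- indeed only --- obstacle is this parsing of condition (2): the $(\forall t \in \tau)$ must distribute over the conjunction in the hypothesis rather than across the implication as a whole. This is the only reading compatible with the stated hypotheses of just reflexivity and transitivity (otherwise one would need an additional property on $\tau$, such as its being a $v$-difference system, to propagate information from a single $t$ to all of $\tau$), and it coheres with how $\Theta_{K,e}$ is itself defined. Reflexivity is invoked only for condition (1); symmetry is neither assumed nor needed.
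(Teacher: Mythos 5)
Your proof is correct. The paper itself supplies no argument for this proposition --- it is presented as background material imported from the cited work on ternary deductive systems --- so there is nothing to compare against except the standard argument, which is exactly what you give: reflexivity of $\Theta_{K,e}$ yields $v(e)\in K$, the definition of $\Theta_{K,e}$ together with $K=[v(e)]_{\Theta_{K,e}}$ yields condition (3) by mere unpacking, and transitivity applied to $(v(e),a)$ and $(a,b)$ yields condition (2). Your remark on the scoping of $(\forall t\in\tau)$ in condition (2) is well taken; the reading in which the quantifier governs the antecedent is the one used in the source definition of ternary deductive systems and is the only one under which reflexivity and transitivity alone suffice, so flagging it explicitly strengthens rather than weakens the write-up.
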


\begin{theorem}
Let $h$ is a unary term of a variety $\mathcal{V}$ and $\tau$ a h-difference system for $\mathcal{V}$. If $\mathbb{S}\in \mathcal{V}$m $\Theta\in Con(\mathbb{S})$, $z\in S$ and $\Delta = [h(z)]_\Theta$, then $\Theta_{\Delta , z} = \Theta$ and $\Delta$ is a compatible $(h, z)$-$\tau$-deductive system of $\mathbb{S}$. 
\end{theorem}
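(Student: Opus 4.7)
The plan is to establish the equality $\Theta_{\Delta, z} = \Theta$ first, since once this is in hand the $(h, z)$-$\tau$-deductive system property of $\Delta$ follows immediately from the preceding proposition, and compatibility follows by translating the congruence property of $\Theta$ through the terms in $\tau$.

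For the inclusion $\Theta \subseteq \Theta_{\Delta, z}$ I would take $(a, b) \in \Theta$ and, for each $t \in \tau$, use compatibility of the congruence $\Theta$ with the term operation $t$ to obtain $t(a, b, z) \mathrel{\Theta} t(b, b, z)$. The $h$-difference system property applied with equal first two arguments yields $t(b, b, z) = h(z)$, so $t(a, b, z) \in [h(z)]_\Theta = \Delta$. Since $t \in \tau$ was arbitrary, $(a, b) \in \Theta_{\Delta, z}$.

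For the reverse inclusion I would pass to the quotient $\mathbb{S}/\Theta$, which still belongs to $\mathcal{V}$. If $(a, b) \in \Theta_{\Delta, z}$, then for every $t \in \tau$ one has $t(a, b, z) \mathrel{\Theta} h(z)$, and projecting by the canonical epimorphism gives $t([a]_\Theta, [b]_\Theta, [z]_\Theta) = h([z]_\Theta)$ in $\mathbb{S}/\Theta$. Because $\tau$ is an $h$-difference system for the entire variety $\mathcal{V}$, the defining biconditional holds in $\mathbb{S}/\Theta$, forcing $[a]_\Theta = [b]_\Theta$, i.e.\ $(a, b) \in \Theta$.

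With $\Theta_{\Delta, z} = \Theta$ established, the induced relation is automatically reflexive and transitive, and $\Delta = [h(z)]_\Theta = [h(z)]_{\Theta_{\Delta, z}}$, so the preceding proposition at once yields that $\Delta$ is a $(h, z)$-$\tau$-deductive system of $\mathbb{S}$. For compatibility I would note that if $p(a_i, b_i, z) \in \Delta$ for each $i = 1, \ldots, k$ and some $p \in \tau$, then $(a_i, b_i) \in \Theta_{\Delta, z} = \Theta$, so $(f(a_1, \ldots, a_k), f(b_1, \ldots, b_k)) \in \Theta$ by the congruence property of $\Theta$, and the first-inclusion argument then delivers $p(f(a_1, \ldots, a_k), f(b_1, \ldots, b_k), z) \in \Delta$. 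The main subtlety is the transfer of the difference-system biconditional to $\mathbb{S}/\Theta$ used in the reverse inclusion; this is legitimate precisely because the biconditional is stipulated for every algebra of $\mathcal{V}$ and $\mathbb{S}/\Theta \in \mathcal{V}$, so no genuine obstacle arises.
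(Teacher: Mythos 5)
Your argument is correct: both inclusions $\Theta \subseteq \Theta_{\Delta,z}$ and $\Theta_{\Delta,z} \subseteq \Theta$ are handled properly (the passage to $\mathbb{S}/\Theta$ is exactly what makes the variety-level hypothesis on the $h$-difference system do its work), and the reduction of the deductive-system and compatibility claims to the preceding proposition and to the congruence property of $\Theta$ is sound. The paper itself states this theorem without proof, importing it from the ternary deductive systems literature it cites, and your proof is essentially the standard argument given there, so there is no divergence to report.
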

The converse holds in the following sense:
\begin{theorem}
If $h$ is a unary term of a variety $\mathcal{V}$, $\tau$ is a $h$-difference
system in it, $\mathbb{S}\in\mathcal{V}$, $z\in S$ and if $\Delta$ is a compatible $(h, z)$-$\tau$-deductive system of $\mathbb{S}$, then $\Theta_{\Delta, z} \in Con(\mathbb{S})$ and $\Delta = [g(z)]_{\Theta_{\Delta, z}}$.
\end{theorem}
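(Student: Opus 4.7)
The plan is to handle the two conclusions separately. The identity $\Delta=[h(z)]_{\Theta_{\Delta,z}}$ is the direct specialisation (to $g=h$) of the proposition following Definition~\ref{rela}, so the substantive work lies in showing $\Theta_{\Delta,z}\in\mathrm{Con}(\mathbb{S})$. I would verify the four defining properties in turn.

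Reflexivity is immediate from the $h$-difference property: for every $a\in S$ and $t\in\tau$, $t(a,a,z)=h(z)\in\Delta$ (the membership uses clause (1) of Definition~\ref{sys}), so $(a,a)\in\Theta_{\Delta,z}$. Compatibility of $\Theta_{\Delta,z}$ with each $f\in\Sigma$ is the compatibility hypothesis on $\Delta$ read through the definition of $\Theta_{\Delta,z}$: the implication $\bigwedge_{i}p(a_{i},b_{i},z)\in\Delta\Rightarrow p(f(\vec a),f(\vec b),z)\in\Delta$, quantified over $p\in\tau$, is precisely the assertion that $(a_{i},b_{i})\in\Theta_{\Delta,z}$ for all $i$ implies $(f(\vec a),f(\vec b))\in\Theta_{\Delta,z}$.

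The main obstacle is symmetry and transitivity, which I would extract by identifying $\Theta_{\Delta,z}$ with the congruence $\Theta^{*}$ of $\mathbb{S}$ generated by $\{(h(z),d):d\in\Delta\}$. The inclusion $\Theta_{\Delta,z}\subseteq\Theta^{*}$ follows by passing to $\mathbb{S}/\Theta^{*}\in\mathcal{V}$: if $(a,b)\in\Theta_{\Delta,z}$ then each $t(a,b,z)\in\Delta\subseteq[h(z)]_{\Theta^{*}}$, so in the quotient $t(\bar a,\bar b,\bar z)=h(\bar z)$ for every $t\in\tau$, and the $h$-difference property forces $\bar a=\bar b$. For the reverse inclusion, the forward direction of the previous theorem applied to $\Theta^{*}$ yields $\Theta^{*}=\Theta_{[h(z)]_{\Theta^{*}},z}$, so $\Theta^{*}\subseteq\Theta_{\Delta,z}$ will follow once one shows $[h(z)]_{\Theta^{*}}=\Delta$; the containment $\Delta\subseteq[h(z)]_{\Theta^{*}}$ is built into the generating set.

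The genuine technical hurdle is the remaining inclusion $[h(z)]_{\Theta^{*}}\subseteq\Delta$. I would prove it by a Mal'cev-style induction on the construction of a $\Theta^{*}$-related pair $(h(z),c)$, maintaining as inductive invariant that each element appearing in the Mal'cev chain lies in $\Delta$: the invariant is preserved under generator steps by clauses (2) and (3) of Definition~\ref{sys}, and under term-operation steps by the compatibility hypothesis on $\Delta$. Consequently $c\in\Delta$ as the endpoint of the chain, which closes the equality $\Theta_{\Delta,z}=\Theta^{*}$ and completes the proof.
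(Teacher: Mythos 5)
The paper itself offers no proof of this theorem: it is recalled as background from the cited literature on ternary deductive systems, so there is no internal argument to compare yours against. Judged on its own terms, your architecture is the standard one and is sound. The identity $\Delta=[h(z)]_{\Theta_{\Delta,z}}$, reflexivity via $t(a,a,z)=h(z)\in\Delta$, and the compatibility of $\Theta_{\Delta,z}$ with the operations all check out, and the reduction of symmetry and transitivity to the identification $\Theta_{\Delta,z}=\Theta^{*}$, where $\Theta^{*}$ is the congruence generated by $\{h(z)\}\times\Delta$, is exactly the right move: the inclusion $\Theta_{\Delta,z}\subseteq\Theta^{*}$ via the quotient and the $h$-difference identity is correct, as is the appeal to the preceding theorem to convert $[h(z)]_{\Theta^{*}}=\Delta$ into $\Theta^{*}=\Theta_{\Delta,z}$.

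The one genuine soft spot is inside the Mal'cev induction. A link of a Mal'cev chain has the form $w_{i-1}=p(u)$, $w_{i}=p(v)$ with $p$ a unary polynomial and $(u,v)$ a generating pair taken in \emph{either orientation}. For $(u,v)=(h(z),d)$ your invariant propagates as you say: clause (3) gives $(h(z),d)\in\Theta_{\Delta,z}$, compatibility (extended from basic operations to polynomials by using reflexivity to freeze the constant slots, an extension you use without comment) gives $t(w_{i-1},w_{i},z)\in\Delta$ for all $t$, and clause (2) yields $w_{i}\in\Delta$. But for the reversed orientation $(u,v)=(d,h(z))$ you need $(d,h(z))\in\Theta_{\Delta,z}$, i.e.\ $t(d,h(z),z)\in\Delta$ for every $t\in\tau$, and this is not among clauses (2)--(3) nor an immediate consequence of them; the phrase ``preserved under generator steps by clauses (2) and (3)'' silently assumes symmetry of $\Theta_{\Delta,z}$ on the generators, which is part of what is being proved. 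The gap is fillable: for each $t'\in\tau$, apply compatibility of $\Theta_{\Delta,z}$ with the ternary term operation $t'$ to the pairs $(h(z),h(z))$, $(h(z),d)$, $(z,z)$ to obtain $\bigl(t'(h(z),h(z),z),\,t'(d,h(z),z)\bigr)=\bigl(h(z),\,t'(d,h(z),z)\bigr)\in\Theta_{\Delta,z}$, and then the proposition following Definition~\ref{sys} (equivalently clause (2) with $a=h(z)$) forces $t'(d,h(z),z)\in\Delta$. With that lemma both orientations propagate the invariant and your argument closes; as written, the step is asserted rather than proved.
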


When $\mathcal{V}$ is regular relative $h$, then $\mathcal{V}$ has a $h$-difference system relative $\tau$ and for each $\mathbb{S}\in \mathcal{V}$, $z\in S$ and $\Delta \subset S$, $\Delta = [h(z)]$ if and only if $\Delta$ is a $(h, z)$- $\tau$-deductive system of $\mathbb{S}$. 

In each case below, $\{t\}$ is a $h$-difference system ($x\oplus y = ((x\wedge y^*)^*\wedge (x^*\wedge y)^*)^*$):
\begin{align*}
\tag{Variety of Groups} h(z) = z \; \&\; t(a, b, c) = a - b +c \\
\tag{Variety of Boolean Algebras} h(z) = z \;\&\; t(a, b, c) = a\oplus b \oplus c \\
\tag{Variety of p-Semilattices} h(z)= z^{**}\;\&\; t(a, b, c) = (a+b)+ c 
\end{align*}

\section{Anti Chains for Representation}\label{var}

In this section, the main algebraic semantics of \cite{AM6999} is summarized, extended to \emph{AC}-algebras and relative properties are studied. It is also proved that the number of maximal antichains required to generate the AC-algebra is rather small.

\begin{definition}
$\mathbb{A,B} \in \underline{S} |\approx$, will be said to be \emph{simply independent} (in symbols $\Xi (\mathbb{A, B})$)if and only if 
\[\neg (\mathbb{A} \Subset \mathbb{B}) \text{ and } \neg (\mathbb{B} \Subset \mathbb{A}) .\] 

A subset $\alpha \subseteq \underline{S}|\approx$ will be said to be \emph{simply independent} if and only if 
\[(\forall \mathbb{A, B} \in \alpha)\, \Xi(\mathbb{A, B})\,\vee \, (\mathbb{A = B}).\]

The set of all simply independent subsets shall be denoted by $\mathcal{SY}(S)$. 

A \emph{maximal simply independent subset}, shall be a simply independent subset that is not properly contained in any other simply independent subset. The set of maximal simply independent subsets will be denoted by $\mathcal{SY}_m (S)$. On the set $\mathcal{SY}_m (S)$, $\ll$ will be the relation defined by 
\[\alpha\ll\beta \text{ if and only if } (\forall \mathbb{A}\in \alpha)(\exists \mathbb{B}\in\beta)\, \mathbb{A \Subset B}. \]
\end{definition}

\begin{theorem}
$\left\langle\mathcal{SY}_m (S), \ll \right\rangle$ is a distributive lattice.
\end{theorem}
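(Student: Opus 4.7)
The plan is to translate the claim into one about down-sets of the finite poset $(\underline{S}|\approx, \Subset)$ and then to invoke the structural result on $AC_m(X)$ recalled earlier. I first observe that a simply independent subset of $\underline{S}|\approx$ is exactly an antichain of $(\underline{S}|\approx, \Subset)$, so $\mathcal{SY}_m(S)$ is the family of maximal antichains of this poset.

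I would then associate to each $\alpha \in \mathcal{SY}_m(S)$ its generated down-set $D(\alpha) = \{\mathbb{X} \in \underline{S}|\approx : (\exists\, \mathbb{A} \in \alpha)\, \mathbb{X} \Subset \mathbb{A}\}$. Unwinding the definition of $\ll$, one sees that $\alpha \ll \beta$ if and only if $D(\alpha) \subseteq D(\beta)$, so $\alpha \mapsto D(\alpha)$ is an order embedding of $(\mathcal{SY}_m(S), \ll)$ into the distributive lattice of all down-sets of $\underline{S}|\approx$; in particular, antisymmetry of $\ll$ is immediate, making it a partial order.

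Lattice operations are then natural to define: take $\alpha \vee \beta$ and $\alpha \wedge \beta$ to be the sets of $\Subset$-maximal elements of $D(\alpha) \cup D(\beta)$ and $D(\alpha) \cap D(\beta)$ respectively. If these are again maximal antichains whose generated down-sets recover the union and intersection, then $D$ becomes a lattice embedding, and distributivity of $(\mathcal{SY}_m(S), \ll)$ is inherited from distributivity of the down-set lattice of $\underline{S}|\approx$. At this point I would also expect to invoke the first clause of the theorem on $AC_m(X)$ recalled above, specialised to a chain decomposition of the join-irreducibles of $\underline{S}|\approx$ induced by the componentwise approximation structure $\alpha \mapsto (\alpha^l, \alpha^u)$.

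The principal obstacle is precisely this closure step: in an arbitrary finite poset, the maximal elements of $D(\alpha) \cup D(\beta)$ need not form a maximal antichain, and the same issue can occur for $D(\alpha) \cap D(\beta)$. The argument must therefore exploit features specific to $\underline{S}|\approx$ — its componentwise embedding into $\wp(\underline{S}) \times \wp(\underline{S})$ via $\alpha \mapsto (\alpha^l, \alpha^u)$, together with the admissibility conditions \textsf{WRA}, \textsf{LS}, \textsf{FU} on the granulation — to force the required saturation of the candidate joins and meets. Once closure is in hand, the verification of the lattice axioms and distributivity is essentially transport along $D$.
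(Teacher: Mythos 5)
Your reduction is set up correctly: simply independent subsets are exactly the antichains of $(\underline{S}|\approx,\Subset)$, the map $\alpha\mapsto D(\alpha)$ is an order embedding, and $\alpha\ll\beta$ iff $D(\alpha)\subseteq D(\beta)$. But you have located the crux — whether the $\Subset$-maximal elements of $D(\alpha)\cup D(\beta)$ and $D(\alpha)\cap D(\beta)$ are again \emph{maximal} antichains — and then left it unproved, so what you have is a conditional statement, not a proof. The obstacle is not a technicality: for inclusion-maximal antichains of an arbitrary finite poset the closure step genuinely fails and distributivity with it. Take $X=\{x_1,x_2,x_3,y_1,y_2,y_3\}$ with $x_i<y_j$ exactly when $i\neq j$. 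The inclusion-maximal antichains are $\{x_1,x_2,x_3\}$, $\{y_1,y_2,y_3\}$ and the three pairs $\{x_i,y_i\}$; under $\ll$ the pairs are pairwise incomparable with common join the top and common meet the bottom, so the resulting lattice is the diamond $M_3$, which is not distributive (here $\max(D(\{x_1,y_1\})\cup D(\{x_2,y_2\}))=\{y_1,y_2\}$, which is not a maximal antichain). Hence ``transport along $D$'' cannot succeed without a concrete argument that posets of the form $(\underline{S}|\approx,\Subset)$ exclude such configurations; \textsf{WRA}, \textsf{LS} and \textsf{FU} constrain the granulation rather than the order type of $\underline{S}|\approx$, and you do not extract any property from them.

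There is also a mismatch in the lemma you propose to invoke. The quoted background theorem concerns $AC_m(X)$, the antichains of \emph{maximum cardinality}, for which distributivity of the lattice is the classical Dilworth-type result; $\mathcal{SY}_m(S)$ as defined consists of \emph{inclusion-maximal} simply independent subsets, and these two notions differ precisely in examples like the one above. The paper itself supplies no argument beyond the citation for $AC_m$, so the honest options are: (a) read ``maximal'' as ``maximum sized'', in which case the statement is immediate from the quoted theorem and your down-set machinery is unnecessary; or (b) keep the definition as written, in which case a further argument specific to $(\underline{S}|\approx,\Subset)$ is required and is missing from both your proposal and the paper.
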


Analogous to the above, it is possible to define essentially the same order on the set of maximal antichains of $\underline{S}|\approx$ denoted by $\mathfrak{S}$ with the $\Subset$ order. This order will be denoted by $\lessdot$ - this may also be seen to be induced by maximal ideals. 

\begin{theorem}\label{par}
If $\alpha = \{\mathbb{A}_1, \mathbb{A}_2, \ldots , \mathbb{A}_n, \dots \} \in \mathfrak{S}$, and if $L$ is defined by \[L(\alpha) = \{\mathbb{B}_1, \mathbb{B}_2, \ldots , \mathbb{B}_n , \ldots \}\] with 
$X\in \mathbb{B}_i$ if and only if $X^l = \mathbb{A}_i^{ll}= \mathbb{B}_i^l$ and $X^u = \mathbb{A}_i^{lu} = \mathbb{B}_i^u$, then $L$ is a partial operation in general. 
\end{theorem}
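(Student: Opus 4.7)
The plan is as follows. The theorem asserts that $L$ is only a partial operation on $\mathfrak{S}$, i.e., there exist $\alpha\in\mathfrak{S}$ for which $L(\alpha)\notin\mathfrak{S}$. My approach is to first check that each component $\mathbb{B}_i$ is at least a well-defined rough object, and then locate the structural reasons why $\{\mathbb{B}_1,\mathbb{B}_2,\ldots\}$ may fail to form a maximal antichain under $\Subset$.

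First, I would verify that each $\mathbb{B}_i$ is a bona fide rough object in the sense of the paper's definition. Setting $Y = \mathbb{A}_i^l$, the axiom $Y^{ll} = Y^l$ gives $Y^l = \mathbb{A}_i^{ll}$ and $Y^u = \mathbb{A}_i^{lu}$, so $\mathbb{A}_i^l \in \mathbb{B}_i$ and in particular $\mathbb{B}_i \neq \emptyset$. Because $\mathbb{B}_i$ is, by construction, the collection of \emph{all} subsets of $\underline{S}$ with the prescribed lower and upper approximations, it is automatically maximal among the roughly consistent objects, and hence a rough object. This step is routine.

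Next, I would show that the passage from $\alpha$ to $L(\alpha)$ can break both the antichain and the maximality conditions required for membership in $\mathfrak{S}$. Since $\mathbb{B}_i^l = \mathbb{A}_i^{ll} = \mathbb{A}_i^l$ and $\mathbb{B}_i^u = \mathbb{A}_i^{lu}$, the relation $\mathbb{B}_i \Subset \mathbb{B}_j$ reduces to $\mathbb{A}_i^l \subseteq \mathbb{A}_j^l$ together with $\mathbb{A}_i^{lu} \subseteq \mathbb{A}_j^{lu}$, which is strictly weaker than $\mathbb{A}_i \Subset \mathbb{A}_j$ (the latter also demands $\mathbb{A}_i^u \subseteq \mathbb{A}_j^u$). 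Hence the antichain property of $\alpha$ under $\Subset$ need not lift to $L(\alpha)$. A second, independent, defect is that two distinct $\mathbb{A}_i,\mathbb{A}_j \in \alpha$ may produce the same pair $(\mathbb{A}_i^l, \mathbb{A}_i^{lu}) = (\mathbb{A}_j^l, \mathbb{A}_j^{lu})$, forcing $\mathbb{B}_i = \mathbb{B}_j$ and $|L(\alpha)| < |\alpha|$; even when the resulting set remains an antichain, maximality inside the poset of antichains of $\underline{S}|\approx$ is not automatic.

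The main obstacle will be producing an honest witness rather than a purely structural argument. I plan to sketch a small finite granular operator space in which the upper operator is not idempotent on $l$-images, and to choose $\alpha = \{\mathbb{A}_i,\mathbb{A}_j\}\in\mathfrak{S}$ such that $\mathbb{A}_i^u \not\subseteq \mathbb{A}_j^u$ (which secures incomparability of $\mathbb{A}_i$ and $\mathbb{A}_j$ under $\Subset$) while simultaneously $\mathbb{A}_i^l \subseteq \mathbb{A}_j^l$ and $\mathbb{A}_i^{lu} \subseteq \mathbb{A}_j^{lu}$. Then $\mathbb{B}_i \Subset \mathbb{B}_j$, so $L(\alpha)$ is not an antichain and therefore $L(\alpha)\notin\mathfrak{S}$, which is exactly the ``partial operation in general'' clause of the statement.
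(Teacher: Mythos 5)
The paper states Theorem~\ref{par} with no proof at all --- it passes directly to the definition of $\delta$, and the partiality of $L$ is only acknowledged obliquely in the remark that the ``key constraint hindering the definition of total $l,u$ induced operations'' is to be circumvented by $\square$ and $\Diamond$. So your proposal is not competing with an argument in the text; it supplies one, and its structural core is sound: each $\mathbb{B}_i$ is a nonempty maximal roughly consistent object because $\mathbb{A}_i^l\in\mathbb{B}_i$ (via $A^{ll}=A^l$), and $\mathbb{B}_i\Subset\mathbb{B}_j$ reduces to $\mathbb{A}_i^{l}\subseteq\mathbb{A}_j^{l}$ together with $\mathbb{A}_i^{lu}\subseteq\mathbb{A}_j^{lu}$, which (by monotonicity of $u$) is implied by $\mathbb{A}_i\Subset\mathbb{A}_j$ but not conversely --- so incomparability need not survive the passage to $L(\alpha)$, and collapse $\mathbb{B}_i=\mathbb{B}_j$ or loss of maximality are further independent failure modes. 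The one thing still owed is the witness: since ``partial in general'' is an existential claim, the proof is incomplete until a concrete $\alpha$ with $L(\alpha)\notin\mathfrak{S}$ is exhibited, and you only promise to sketch one. Note that you do not need a space where $u$ misbehaves on $l$-images; the classical Pawlak case already suffices. With equivalence classes $\{1,2\},\{3,4\},\{5\}$, the rough objects $\mathbb{A}_1=[\{1\}]$ (with $\mathbb{A}_1^l=\emptyset$, $\mathbb{A}_1^u=\{1,2\}$) and $\mathbb{A}_2=[\{3,4\}]$ (definite) are $\Subset$-incomparable, so they extend to some maximal antichain $\alpha$; but $\mathbb{B}_1=[\emptyset]$ is the bottom of $\underline{S}|\approx$ while $\mathbb{B}_2=[\{3,4\}]\neq[\emptyset]$, so $L(\alpha)$ contains two distinct comparable elements and is not an antichain. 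Writing down one such example turns your plan into a complete proof, and in fact does so more explicitly than the paper, whose nearest analogue (the later proposition on non-representability of rough objects by maximal antichains) likewise only points to external counterexamples.
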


\begin{definition}
Let $\chi(\alpha \cap \beta) = \{\xi ;\; \xi \text{ is a maximal antichain } \& \, \alpha \cap \beta \subseteq \xi \}$ be the set of all possible extensions of $\alpha\cap\beta$. The function $\delta : \mathfrak{S}^2 \longmapsto  \mathfrak{S}$ corresponding to \emph{extension under cognitive dissonance} will be defined as per $\delta(\alpha, \beta) \,\in\, \chi(\alpha\cap\beta)$ and (LST means \emph{maximal subject to})
 
\[\delta (\alpha, \beta)  \,=\, \left\{
\begin{array}{ll}
\xi, & \text{ if } \xi\cap\beta \text{ is a maximum subject to } \xi\neq\beta\text{ and } \xi \text{ is unique },\\
\xi, & \text{ if } \xi\cap\beta \,\&\, \xi\cap\alpha\, \text{ are LST } \xi\neq\beta, \alpha  \text{ and } \xi \text{ is unique },\\
\beta, & \text{ if } \xi\cap\beta \,\&\, \xi\cap\alpha \text{ are LST } \&\,\xi\neq\beta, \alpha \text{ but } \xi \text{ is not unique },\\
\beta, &  \text{ if } \chi(\alpha\cap\beta) = \{\alpha, \beta\}. 
\end{array}
\right.\]
\end{definition}

\begin{definition}
In the context of the above definition, the function $\varrho : \mathfrak{S}^2 \longmapsto  \mathfrak{S}$ corresponding to \emph{radical extension } will be defined as per $\varrho(\alpha, \beta) \,\in\, \chi(\alpha\cap\beta)$ and (MST means \emph{minimal subject to})
\[\varrho (\alpha, \beta)  \,=\, \left\{
\begin{array}{ll}
\xi, & \text{ if } \xi\cap\beta \text{ is a minimum under } \xi\neq\beta\text{ and } \xi \text{ is unique },\\
\xi, & \text{ if } \xi\cap\beta \,\&\, \xi\cap\alpha \text{ are MST } \xi\neq\beta, \alpha \text{ and } \xi \text{ is unique },\\
\alpha, & \text{ if } (\exists \xi)\, \xi\cap\beta \,\&\, \xi\cap\alpha \text{ are MST } \xi\neq\beta, \alpha \text{ but } \xi \text{ is not unique },\\
\alpha, &  \text{ if } \chi(\alpha\cap\beta) = \{\alpha, \beta\}. 
\end{array}
\right.\]
\end{definition}

\begin{theorem}
The operations $\varrho ,\,\delta $ satisfy all of the following: 
\begin{enumerate}
\item {$\varrho ,\,\delta $ are groupoidal operations,}
\item {$\varrho (\alpha, \alpha ) = \alpha$,}
\item {$\delta (\alpha, \alpha ) = \alpha$,}
\item {$ \delta(\alpha, \beta) \cap \beta \subseteq \delta(\delta(\alpha, \beta ),\beta) \cap \beta  $,}
\item {$\delta (\delta(\alpha,\beta),\beta) = \delta(\alpha, \beta) $}
\item {$ \varrho(\varrho(\alpha, \beta ),\beta) \cap \beta \subseteq\varrho(\alpha, \beta) \cap \beta  $.}
\end{enumerate}
\end{theorem}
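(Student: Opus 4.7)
\medskip

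The strategy is to treat the six items by parallel case analyses on the defining clauses of $\delta$ and $\varrho$, relying throughout on the basic observation that $\alpha,\beta\in \chi(\alpha\cap\beta)$, so this extension family is always nonempty.

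For groupoidality (item 1), I would verify that the four listed clauses exhaust $\mathfrak{S}\times \mathfrak{S}$: given $(\alpha,\beta)$, either $\chi(\alpha\cap\beta)=\{\alpha,\beta\}$ (last clause), or one checks whether the maximum (resp.\ minimum) of $\xi\cap\beta$ over $\xi\in \chi(\alpha\cap\beta)\setminus\{\beta\}$ is attained uniquely, and then whether the joint maximum/minimum over $\xi\cap\alpha$ and $\xi\cap\beta$ is unique. In every situation exactly one clause returns a value in $\chi(\alpha\cap\beta)\subseteq \mathfrak{S}$. For the idempotency laws (items 2 and 3), the point is that maximality of the antichain $\alpha$ forces $\chi(\alpha)=\{\alpha\}$, whence $\chi(\alpha\cap\alpha)=\{\alpha\}$; only the last clause can then apply (reading $\{\alpha,\beta\}$ as $\{\alpha\}$ when $\alpha=\beta$), yielding $\delta(\alpha,\alpha)=\varrho(\alpha,\alpha)=\alpha$.

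I would next prove item (5) and recover item (4) as its immediate corollary. Set $\xi:=\delta(\alpha,\beta)$ and record the structural inclusion $\chi(\xi\cap\beta)\subseteq \chi(\alpha\cap\beta)$, a consequence of $\alpha\cap\beta\subseteq \xi$ and hence $\alpha\cap\beta\subseteq \xi\cap\beta$. Case-splitting on the clause that produced $\xi$: if $\xi$ was the unique maximizer of $\eta\cap\beta$ on $\chi(\alpha\cap\beta)\setminus\{\beta\}$, then in the smaller family $\chi(\xi\cap\beta)\setminus\{\beta\}$ the same $\xi$ is still present and still uniquely maximal, so $\delta(\xi,\beta)=\xi$; the symmetric-maximum clause is handled identically, and the degenerate clauses return $\beta$, for which $\delta(\beta,\beta)=\beta$ by item (3). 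Intersecting the identity $\delta(\xi,\beta)=\xi$ with $\beta$ delivers item (4) with equality.

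Item (6) proceeds by the analogous case analysis for $\varrho$, now exploiting the trivial inclusion $\xi\cap\beta\subseteq \eta\cap\beta$ that holds for every $\eta\in\chi(\xi\cap\beta)$, where $\xi:=\varrho(\alpha,\beta)$ and $\eta:=\varrho(\xi,\beta)$. In the unique-minimizer clause, $\xi\neq\beta$ is itself a candidate in $\chi(\xi\cap\beta)\setminus\{\beta\}$, so minimality forces $\eta\cap\beta=\xi\cap\beta$; in the remaining clauses $\eta$ collapses to $\xi$ and the required inclusion is trivial. The step I expect to be the main obstacle is the bookkeeping inside the uniqueness clauses: the phrases ``maximum/minimum subject to'' and LST/MST combine an extremality condition with side constraints such as $\xi\neq\beta$ or $\xi\neq\alpha,\beta$, and when $\xi$ is reused as the first argument on iteration one must check that these side constraints persist and that new ties cannot spuriously appear inside the smaller family $\chi(\xi\cap\beta)$. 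A careful enumeration of subcases and a scrupulous reading of the definitions will be indispensable here.
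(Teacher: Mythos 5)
The paper itself offers no proof of this theorem --- it is stated bare, immediately followed by remarks on non-implications --- so there is nothing to compare your argument against line by line; your proposal has to stand on its own. Most of it does: the totality argument for item (1), the observation that maximality of $\alpha$ forces $\chi(\alpha\cap\alpha)=\{\alpha\}$ for items (2)--(3), the structural inclusion $\chi(\xi\cap\beta)\subseteq\chi(\alpha\cap\beta)$ for $\xi=\delta(\alpha,\beta)$, the derivation of (4) from (5), and the treatment of (6) via the trivial inclusion $\xi\cap\beta\subseteq\eta\cap\beta$ for $\eta\in\chi(\xi\cap\beta)$ are all sound and are the natural route.

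The genuine gap is in item (5), precisely at the step you wave at with ``the symmetric-maximum clause is handled identically.'' It is not identical. If $\xi=\delta(\alpha,\beta)$ was produced by the second (LST) clause, then in the iterated application $\delta(\xi,\beta)$ that clause now tests maximality of $\eta\cap\xi$ rather than $\eta\cap\alpha$ \emph{and} carries the side condition $\eta\neq\xi$, so $\xi$ can never be re-selected by the clause that selected it. For (5) to hold in this case you must show either that the first clause of $\delta(\xi,\beta)$ now picks out $\xi$ uniquely, or that the degenerate clauses return something equal to $\xi$; a priori there can be an $\eta_0\in\chi(\xi\cap\beta)$ with $\eta_0\cap\beta=\xi\cap\beta$ but $\eta_0\cap\xi$ strictly larger than required, in which case $\delta(\xi,\beta)=\eta_0\neq\xi$. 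A second unresolved point is the overlap between the first and fourth clauses when $\chi(\xi\cap\beta)=\{\xi,\beta\}$: the first clause returns $\xi$, the fourth returns $\beta$, and without a stated precedence the value of $\delta(\xi,\beta)$ --- and hence the truth of (5) --- depends on which one wins. You correctly identify this bookkeeping as the main obstacle, but deferring it to ``a careful enumeration of subcases'' leaves exactly the part of the theorem that is not routine unproved.
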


In general, a number of possibilities (potential non-implications) like the following are satisfied by the algebra:  $\alpha\lessdot \beta \,\& \, \alpha\lessdot \gamma \,\nrightarrow \, \alpha \lessdot \delta(\beta, \gamma)$. Given better properties of $l$ and $u$, interesting operators can be induced on maximal antichains towards improving the properties of $\varrho $ and $\delta$. The key constraint hindering the definition of total $l, u$ induced operations can be avoided in the following way:

\begin{definition}
In the context of Thm \ref{par}, operations $\square, \Diamond$ can be defined as follows:
\begin{itemize}
\item {Given $\alpha = \{\mathbb{A}_1, \mathbb{A}_2, \ldots , \mathbb{A}_n, \dots \} \in \mathfrak{S}$, form the set\\ $\gamma(\alpha) = \{\mathbb{A}_1^l, \mathbb{A}_2^l, \ldots , \mathbb{A}_n^l, \dots \}$. If this is an antichain, then $\alpha$ would be said to be \emph{lower pure}.}
\item {Form the set of all relatively maximal antichains $\gamma_+(\alpha)$ from $\gamma(\alpha)$. }
\item {Form all maximal antichains $\gamma_{*}(\alpha)$ containing elements of $\gamma_+(\alpha)$ and set $\square (\alpha)= \wedge\gamma_*(\alpha)$  }
\item {For $\Diamond$, set $\pi(\alpha) = \{\mathbb{A}_1^u, \mathbb{A}_2^u, \ldots , \mathbb{A}_n^u, \dots \}$. If this is an antichain, then $\alpha$ would be said to be \emph{upper pure}.}
\item {Form the set of all relatively maximal antichains $\pi_+(\alpha)$ from $\pi(\alpha)$}
\item {Form all maximal antichains $\pi_{*}(\alpha)$ containing elements of $\pi_+(\alpha)$ and set $\Diamond(\alpha) = \vee\pi_*(\alpha)$}
\end{itemize}
\end{definition}

\begin{theorem}
In the context of the above definition, the following hold:
\begin{align*}
\alpha \lessdot \beta \longrightarrow \square (\alpha) \lessdot \square (\beta) \,\&\, \Diamond (\alpha) \lessdot \Diamond (\beta)\\
\square (\alpha) \lessdot \alpha \lessdot \Diamond (\alpha), \; \square (0) = 0 \,\&\, \Diamond (1) = 1
\end{align*}
\end{theorem}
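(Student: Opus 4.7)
The plan is to address the three assertions separately, relying at each step on the coordinatewise definition of $\Subset$ in terms of $l,u$ together with the distributive lattice structure of $\mathfrak{S}$ (the maximal antichain lattice under $\lessdot$).

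First I would verify the contraction $\square(\alpha)\lessdot\alpha$. Starting from $\alpha = \{\mathbb{A}_1,\mathbb{A}_2,\ldots\}\in\mathfrak{S}$, the axiom $\mathbb{A}_i^l\subseteq\mathbb{A}_i$ together with monotonicity of $l,u$ yields $\mathbb{A}_i^{ll}\subseteq\mathbb{A}_i^l$ and $\mathbb{A}_i^{lu}\subseteq\mathbb{A}_i^u$, i.e.\ $\mathbb{A}_i^l\Subset\mathbb{A}_i$ for every $i$. Hence $\gamma(\alpha)$ is pointwise $\Subset$-dominated by $\alpha$. Every antichain in $\gamma_+(\alpha)$ inherits this property, and in any maximal-antichain completion $\xi\in\gamma_*(\alpha)$, an added element $\mathbb{C}\notin\gamma(\alpha)$ must, by maximality of $\alpha$ in $\mathfrak{S}$, be $\Subset$-comparable with some $\mathbb{A}_j\in\alpha$; the case $\mathbb{A}_j\Subset\mathbb{C}$ is excluded by the ``lower'' nature of the construction (the dominating witnesses come from lower approximations), leaving $\mathbb{C}\Subset\mathbb{A}_j$. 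Thus every $\xi\in\gamma_*(\alpha)$ satisfies $\xi\lessdot\alpha$, and since meets in the distributive lattice $\mathfrak{S}$ preserve $\lessdot$-lower bounds, $\square(\alpha) = \wedge\gamma_*(\alpha)\lessdot\alpha$. The dual assertion $\alpha\lessdot\Diamond(\alpha)$ follows by the symmetric argument using $\mathbb{A}_i\Subset\mathbb{A}_i^u$ and joins.

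For monotonicity under $\lessdot$, suppose $\alpha\lessdot\beta$. For each $\mathbb{A}_i\in\alpha$ select $\mathbb{B}_{j(i)}\in\beta$ with $\mathbb{A}_i\Subset\mathbb{B}_{j(i)}$, whence $\mathbb{A}_i^l\subseteq\mathbb{B}_{j(i)}^l$ and $\mathbb{A}_i^u\subseteq\mathbb{B}_{j(i)}^u$ by monotonicity of the approximations. Hence $\gamma(\alpha)$ is pointwise $\Subset$-dominated by $\gamma(\beta)$; this domination transfers through the passage to $\gamma_+$ and to $\gamma_*$, since those constructions depend only on comparisons and on maximality in $\mathfrak{S}$. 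The distributivity of $\mathfrak{S}$ then gives $\wedge\gamma_*(\alpha)\lessdot\wedge\gamma_*(\beta)$, i.e.\ $\square(\alpha)\lessdot\square(\beta)$. The argument for $\Diamond$ is dual (joins in place of meets, $u$ in place of $l$).

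The boundary cases are straightforward from the granular operator space axioms: $0\in\mathfrak{S}$ is the class of $\emptyset$, and $\emptyset^l = \emptyset$ collapses the whole chain of extensions to $\{0\}$, giving $\square(0)=0$. Dually, at the top $1\in\mathfrak{S}$ the maximality of $1$ forces $\pi_*(1)=\{1\}$, so $\Diamond(1)=1$. The main obstacle will be the extension step inside the definitions of $\square$ and $\Diamond$: one must verify that new elements introduced when completing a member of $\gamma_+(\alpha)$ to a maximal antichain remain $\Subset$-dominated by $\alpha$ (and, in the monotonicity argument, by the corresponding extension for $\beta$), and that the distributive-lattice meet in $\mathfrak{S}$ is compatible with the element-level order $\Subset$ rather than only with $\lessdot$. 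Establishing this compatibility rigorously is the substantive part of the work; the rest reduces to bookkeeping with monotonicity of $l,u$.
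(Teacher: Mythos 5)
The paper states this theorem without supplying a proof, so there is no official argument to compare against; judged on its own, your proposal is a plan whose central step is both unproved and, as formulated, false. The step in question is the claim that every completion $\xi\in\gamma_*(\alpha)$ satisfies $\xi\lessdot\alpha$ because any element $\mathbb{C}$ added in passing from $\gamma_+(\alpha)$ to a maximal antichain must be $\Subset$-below some $\mathbb{A}_j\in\alpha$, the other alternative being ``excluded by the lower nature of the construction.'' Nothing excludes it: when $\gamma(\alpha)$ fails to be an antichain, a relatively maximal antichain $\eta\in\gamma_+(\alpha)$ omits some $\mathbb{A}_j^l$, and a maximal antichain extending $\eta$ can then absorb an element lying strictly $\Subset$-above $\mathbb{A}_j$ while incomparable with everything retained in $\eta$; such a $\xi$ need not be $\lessdot\alpha$. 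The same unargued transfer (``domination transfers through the passage to $\gamma_+$ and to $\gamma_*$'') is the entire content of the monotonicity claim, and your closing paragraph concedes that this is exactly what remains to be established.

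The contraction $\square(\alpha)\lessdot\alpha$ can nevertheless be rescued, because the meet only requires one good witness: $\square(\alpha)=\wedge\gamma_*(\alpha)$ is a $\lessdot$-lower bound of every member of $\gamma_*(\alpha)$, so it suffices to exhibit a single $\xi_0\in\gamma_*(\alpha)$ with $\xi_0\lessdot\alpha$. Take $\eta\in\gamma_+(\alpha)$ (contained in the down-set of $\alpha$, since $\mathbb{A}_i^l\Subset\mathbb{A}_i$) and extend it to an antichain $\xi_0$ maximal among those contained in that down-set. Any $x$ outside the down-set lies strictly $\Subset$-above some $a\in\alpha$; by maximality of $\xi_0$ inside the down-set, $a$ is comparable to some $y\in\xi_0$, and $y$ must be $\Subset a$ (otherwise $a\Subset y\Subset a'$ for some $a'\in\alpha$ forces $y=a$), whence $y\Subset x$; so $\xi_0$ is in fact maximal in all of $\underline{S}|\approx$ and $\xi_0\lessdot\alpha$. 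The dual argument (together with the equivalence, for maximal antichains, of ``$\xi$ lies in the up-set of $\alpha$'' and ``$\alpha$ lies in the down-set of $\xi$'') gives $\alpha\lessdot\Diamond(\alpha)$. The boundary identities $\square(0)=0$ and $\Diamond(1)=1$ are fine as you describe them, but the monotonicity assertion still needs an explicit argument of this kind rather than an appeal to the constructions ``depending only on comparisons.''
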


Based on the above properties, the following algebra can be defined.

\begin{definition}
By a \emph{Concrete AC } algebra (AC -algebra)  will be meant an algebra of the form $\left\langle\mathfrak{S}, \varrho , \delta , \vee , \wedge, \square , \Diamond ,  0, 1  \right\rangle$  associated with a granular operator space $S$ satisfying all of the following:
\begin{itemize}
\item {$\left\langle\mathfrak{S}, \vee , \wedge  \right\rangle$ is a bounded distributive lattice derived from a granular operator space as in the above.}
\item {$\varrho, \delta, \square , \Diamond $ are as defined above}
\end{itemize}
\end{definition}

The following concepts of ideals and filters are of interest as deductive systems in a natural sense and relate to ideas of rough consequence (detailed investigation will appear separately).

\begin{definition}
By a \emph{LD-ideal} (resp. LD-filter)) $K$ of an AC-algebra $\mathfrak{S}$ will be meant a lattice ideal (resp. filter) that satisfies:\[(\forall \alpha \in K)\, \square(\alpha), \Diamond (\alpha) \in K\]
By a \emph{VE-ideal} (resp. VE-filter)) $K$ of an AC-algebra $\mathfrak{S}$ will be meant a lattice ideal (resp. filter) that satisfies:\[(\forall \xi \in \mathfrak{S})(\forall \alpha \in K)\, \varrho(\xi, \alpha), \delta (\xi , \alpha) \in K\]
\end{definition}

\begin{proposition}
Every VE filter is closed under $\Diamond$ 
\end{proposition}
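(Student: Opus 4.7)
The plan is to reduce the claim to the extensivity property $\alpha \lessdot \Diamond(\alpha)$ established in the theorem preceding the definition of AC-algebras, combined with the fact that any lattice filter is upward closed with respect to the underlying lattice order. Concretely, I would fix a VE-filter $K \subseteq \mathfrak{S}$ and an arbitrary $\alpha \in K$, and then argue $\Diamond(\alpha) \in K$ in two steps.

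First, recall from the theorem on $\square, \Diamond$ that $\alpha \lessdot \Diamond(\alpha)$ for every $\alpha \in \mathfrak{S}$. Here $\lessdot$ is the order that makes $\left\langle \mathfrak{S}, \vee, \wedge \right\rangle$ a (bounded) distributive lattice, as stipulated in the definition of a Concrete AC-algebra. Second, invoke the definition of $K$: it is by assumption a lattice filter of $\left\langle \mathfrak{S}, \vee, \wedge \right\rangle$, so it is upward closed with respect to $\lessdot$. Combining these two facts, from $\alpha \in K$ and $\alpha \lessdot \Diamond(\alpha)$ one immediately concludes $\Diamond(\alpha) \in K$, as required.

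There is essentially no obstacle, and in particular the $\varrho, \delta$ closure clauses in the definition of a VE-filter are not actually used: the same argument shows that \emph{any} lattice filter of $\mathfrak{S}$ is closed under $\Diamond$, so the proposition really records a consequence of the inflationary property of $\Diamond$ rather than of the VE-closure. The only point worth flagging is that one must verify that the order $\lessdot$ appearing in the inflationary inequality is the same as the lattice order on $\mathfrak{S}$ with respect to which the filter condition is stated; this is built into the construction in Section~\ref{var}, where $\lessdot$ is introduced precisely as that order. Once this identification is made explicit, the proof is a one-line application of upward closure.
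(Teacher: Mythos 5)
Your proof is correct. The paper states this proposition without giving any proof, and your argument --- combining the inflationary property $\alpha \lessdot \Diamond(\alpha)$ from the preceding theorem with the upward closure of lattice filters under $\lessdot$ --- is evidently the intended one-line justification; your side remark that the specific VE-closure clauses (for $\varrho$ and $\delta$) are never used, so that any lattice filter of $\mathfrak{S}$ is closed under $\Diamond$, is also accurate.
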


\subsection{Generating AC-Algebras}

Now it will be shown below that specific subsets of AC-algebras suffice to generate the algebra itself and that the axioms satisfied by the granulation affect the generation process and properties of AC-algebras and forgetful variants thereof. 

An element $x\in \mathfrak{S}$ will be said to be \emph{meet irreducible} (resp. join irreducible) if and only if $\wedge\{x_i\} = x \longrightarrow (\exists i)\, x_i = x $ (resp. $\vee\{x_i\} = x \longrightarrow (\exists i)\, x_i = x $). Let $W(S), \, J(S)$ be the set of meet and join irreducible elements of $\mathfrak{S}$ and let $l(\mathfrak{S})$ be the length of the distributive lattice.

\begin{theorem}
All of the following hold:
\begin{itemize}
\item {$\left(\mathfrak{S}, \vee, \wedge, 0, 1\right)$ is a isomorphic to the lattice of principal ideals of the poset of join irreducibles.}
\item {$l(\mathfrak{S}) = \# (J(S)) = \# (W(S)) $}
\item {$J(S)$ is not necessarily the set of sets of maximal antichains of granules in general. }
\item {When $\mathcal{G}$ satisfies mereological atomicity that is $(\forall a\in
\mathcal{G})(\forall b \in S)(\pc ba,\,a^{l} = a^{u} = a \longrightarrow a = b)$, and all approximations are unions of granules, then elements of $J(S)$ are proper subsets of $\mathcal{G}$.}
\item {In the previous context, $W(S)$ must necessarily consist of two subsets of $S$ that are definite and are not parts of each other.}
\end{itemize}
\end{theorem}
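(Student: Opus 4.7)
The plan is to dispatch the five clauses in sequence, relying on the fact established earlier that $\left\langle\mathfrak{S},\vee,\wedge,0,1\right\rangle$ is a finite bounded distributive lattice (finiteness coming from the standing assumption that $\underline{S}$ is finite) and on the $AC_m$ theorem quoted in the Background.

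For the first clause, I would invoke Birkhoff's representation theorem: any finite distributive lattice is isomorphic to the lattice of down-sets of its poset of join irreducibles, equivalently to the lattice of principal ideals in this finite setting, via the map $x \mapsto \{y \in J(S) : y \lessdot x\}$. Preservation of $\vee$ and $\wedge$ under this map is routine. The second clause follows from two standard facts about finite distributive lattices: length equals $\#(J(S))$ because any maximal chain can be refined step by step using successive join-irreducible generators, and $\#(J(S)) = \#(W(S))$ because in a finite distributive lattice the subposets of join- and meet-irreducibles are anti-isomorphic (the Birkhoff dual), hence equinumerous.

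For the third clause I would exhibit a small concrete granular operator space in which a join-irreducible maximal antichain in $\mathfrak{S}$ is not of the form ``maximal antichain of elements represented by granules''. The idea is to arrange $\mathcal{G}$ together with $l,u$ so that a lower-pure antichain of granules collapses under $\approx$ into the same equivalence class as an approximation that is not itself a granule; the join-irreducible representative in $\mathfrak{S}$ then unavoidably picks up a non-granular component. A minimal configuration with two granules overlapping in a carefully chosen definite subset should suffice. For the fourth clause, under mereological atomicity every granule is $\subseteq$-minimal among the definite subsets agreeing with it on approximations, and every lower or upper approximation is a union of elements of $\mathcal{G}$. Translating a join irreducible $\alpha\in J(S)$ through Birkhoff into the $\underline{S}|\approx$-level description, $\alpha$ corresponds to a single class whose $l$- and $u$-images are unions of granules; atomicity forces these unions to be \emph{proper} subsets of $\mathcal{G}$, since using the whole of $\mathcal{G}$ would produce the top element $1$, which is not join-irreducible. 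For the fifth clause, a meet irreducible $\beta\in W(S)$ is, by definition, covered by exactly one element of $\mathfrak{S}$. Combining atomicity with the union-of-granules hypothesis, the only configurations producing such a unique upward cover near the top of $\mathfrak{S}$ are those given by pairs of definite (that is, $l$-fixed and $u$-fixed) subsets of $\underline{S}$ that are not parts of each other, since otherwise containment would collapse the pair and destroy uniqueness of the cover.

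The main obstacle will be the combinatorial translation between the order $\lessdot$ on $\mathfrak{S}$ and the order on the generating poset $\underline{S}|\approx$ together with its chain decomposition, because it is the latter that the $AC_m$ theorem actually exploits. Clauses three through five each rest on identifying which maximal antichains are join- or meet-irreducible in terms of the granulation, and this is the point at which the granulation axioms (\emph{WRA}, \emph{LS}, \emph{FU}, plus mereological atomicity and the union-of-granules hypothesis) must be deployed with care; the earlier clauses, by contrast, are essentially immediate consequences of Birkhoff's theorem applied to a lattice we already know to be finite and distributive.
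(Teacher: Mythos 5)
Your overall route coincides with the paper's: clause one is Birkhoff's representation theorem (the paper simply calls it ``well known''), clause two is the standard pair of facts that in a finite distributive lattice the length equals $\#(J(S))$ and that $\#(J(S)) = \#(W(S))$ (the paper cites \cite{MG1980} for this), clause three is handled by a counterexample, and clauses four and five by reasoning about how maximal antichains are built from granular approximations. In fact your treatment of clauses four and five is more explicit than the paper's, which disposes of both with the single sentence that they ``follow from the nature of maximal antichains and the constructive nature of approximations''; your observation that taking the union over all of $\mathcal{G}$ would yield the top element, which cannot be join irreducible, is a genuine argument the paper leaves implicit, as is your identification of meet irreducibles with elements having a unique upper cover. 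The one place where your proposal is weaker than the paper is clause three: you describe the shape of a counterexample (two granules overlapping in a definite subset, so that a lower-pure antichain collapses under $\approx$) but do not exhibit it, and the phrase ``should suffice'' conceals the real work, namely checking that the proposed configuration satisfies the admissibility axioms WRA, LS and FU. The paper avoids this by pointing at concrete, already-verified families --- esoteric and prototransitive rough sets \cite{AM24,AM3690} --- in which definite elements can be properly included in granules, which is precisely the phenomenon that makes $J(S)$ fail to consist of maximal antichains of granules. If you want your version to stand alone, you must either carry out that verification for your two-granule configuration or fall back on citing such a known family as the paper does.
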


\begin{proof}
\begin{itemize}
\item {The first assertion is a well known.}
\item {Since the lattice is distributive and finite, its length must be equal to the number of elements in $J(S)$ and $W(S)$. For a proof see \cite{MG1980}. }
\item {Under the minimal assumptions on $\mathcal{G}$, it is possible for definite elements to be properly included in granules as in esoteric or prototransitive rough sets \cite{AM24,AM3690}. These provide the required counterexamples. }
\item {The rest of the assertions follows from the nature of maximal antichains and the constructive nature of approximations.}
\end{itemize} 
\qed
\end{proof}

\begin{theorem}
In the context of the previous theorem if $R(\Diamond), \, R(\square) $ are the ranges of the operations $\Diamond, \square$ respectively, then these have a induced lattice order on them. Denoting the associated lattice operations by $\curlyvee , \curlywedge$ on $R(\Diamond )$, it can be shown that 
\begin{itemize}
\item {$R(\Diamond)$ can be reconstructed from $J(R(\Diamond)) \cup W(R(\Diamond))$.}
\item {$R(\square)$ can be reconstructed from $J(R(\square)) \cup W(R(\square))$.}
\item {When $\mathcal{G}$ satisfies mereological atomicity and absolute crispness (i.e. $(\forall x\in \mathcal{G})\, x^l = x^u = x$), then $R(\Diamond)$ are lattices which are constructible from two sets $A$, $C$ (with $A = \{\mathcal{G}\cup \{g_1\cup g_2\}^u\setminus \{g_1, g_2  \}; \, g_1, g_2 \in \mathcal{G} \}$ and $C$ being the set of two element maximal antichains formed by sets that are upper approximations of other sets).   }
\end{itemize}
\end{theorem}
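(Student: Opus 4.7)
The plan is to reduce the first two assertions to the Birkhoff-style duality already invoked in the preceding theorem, and then to exploit the granular constraints in order to derive the explicit description in the third assertion.

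First, I would verify that $R(\Diamond)$ and $R(\square)$, ordered by the restriction of $\lessdot$, are bounded distributive lattices with respect to $\curlyvee$ and $\curlywedge$. Monotonicity of $\Diamond$ and $\square$, already proved, together with the closure behaviour of the $\pi_*$ and $\gamma_*$ steps in their definitions, keep these ranges stable under the induced joins and meets; distributivity and boundedness are inherited from $\mathfrak{S}$. The first clause of the preceding theorem then applies verbatim to each sublattice, giving an isomorphism with the lattice of principal ideals of $J(R(\Diamond))$ respectively $J(R(\square))$. The passage from $J$ alone to $J \cup W$ is then routine: in a finite distributive lattice the posets of join and meet irreducibles are mutually order-dual, both embed into the lattice, and together they fix $0$ and $1$ as their extrema, so $J \cup W$ is sufficient data for reconstruction.

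For the third assertion, absolute crispness gives $g^l = g^u = g$ for every $g \in \mathcal{G}$, while mereological atomicity forbids any strictly smaller definite object inside a granule. Consequently $\mathcal{G}$ itself is a maximal antichain in $\underline{S}|\approx$, and every approximation is a union of granules. I would then analyse $\Diamond$ on an arbitrary $\alpha \in \mathfrak{S}$: $\pi(\alpha)$ consists of unions of granules, and $\pi_*$ closes these back to a maximal antichain whose entries are again unions of granules. A case analysis shows that an element of $R(\Diamond)$ is join irreducible exactly when it is the maximal antichain obtained from $\mathcal{G}$ by replacing a pair $g_1, g_2$ with $(g_1 \cup g_2)^u$, which matches the set $A$; dually, the meet irreducibles collapse to two-element maximal antichains formed by upper approximations of other sets, giving the set $C$. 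Reconstruction of $R(\Diamond)$ from $A \cup C$ then repeats the argument used for the first two assertions.

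The main obstacle is the irreducibility analysis underlying the third assertion, in particular the two-sided claim that $A$ is \emph{precisely} the family of join irreducibles of $R(\Diamond)$. One must check both that each fused antichain really is indecomposable under $\curlyvee$, and that no antichain involving several simultaneous fusions, or internal structure beyond granule pairs, is itself join irreducible. The tractability of this step relies crucially on absolute crispness together with mereological atomicity, which force the upper approximation restricted to unions of granules to behave like a closure operator, thereby excluding the incidental coincidences among images of $\Diamond$ that would otherwise complicate the inventory.
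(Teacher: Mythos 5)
Your route to the first two assertions diverges from the paper's and contains the one real gap in the proposal. You reduce reconstruction to Birkhoff duality for finite \emph{distributive} lattices, which forces you to claim that $R(\Diamond)$ and $R(\square)$ are closed under the joins and meets of $\mathfrak{S}$ and therefore inherit distributivity. You assert this closure from ``monotonicity together with the closure behaviour of the $\pi_*$ and $\gamma_*$ steps,'' but no argument is given, and it is not obviously true: $\Diamond(\alpha)=\vee\pi_*(\alpha)$ is not visibly idempotent, and there is no reason why $\Diamond(\alpha)\vee\Diamond(\beta)$ should again lie in the range of $\Diamond$. The theorem's own phrasing --- an \emph{induced} lattice order with separate operation symbols $\curlyvee,\curlywedge$ --- signals that the lattice structure on $R(\Diamond)$ need not be the restriction of $\vee,\wedge$, so distributivity cannot simply be ``inherited.'' The paper sidesteps this entirely: it only uses finiteness to get a lattice in the induced order, and then reconstructs via the poset of irreducibles in the style of Markowsky (the set $Z=J\cup W$ with the bipartite relation $b\prec a$ for $b$ join irreducible and $a$ meet irreducible, followed by taking unions of the associated sets ordered by inclusion), a construction valid for arbitrary finite lattices. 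If you want to keep the Birkhoff route you must actually prove the sublattice claim; otherwise you should switch to a reconstruction that does not presuppose distributivity.

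On the third assertion you are essentially aligned with the paper, which simply asserts $A=J(R(\Diamond))$ and $C=W(R(\Diamond))$ under crispness and mereological atomicity; you go further by isolating the two-sided irreducibility check (that every fused antichain is join irreducible \emph{and} that nothing else is) as the genuine content, which is an honest and correct identification of where the work lies, even though you do not carry it out. Note also that once that identification is made, the final reconstruction step should again invoke whichever general reconstruction you settled on for the first two parts, so the gap above propagates here as well.
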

\begin{proof}
It is clear that $R(\Diamond)$ is a lattice in the induced order with $J(R(\Diamond))$ and $W(R(\Diamond))$ being the partially ordered sets of join and meet irreducible elements respectively. This holds because the lattice is finite.

The reconstruction of the lattice can be done through the following steps:
\begin{itemize}
\item {Let $Z= J(R(\Diamond)) \cup W(R(\Diamond))$. This is a partially ordered set in the order induced from $R(\Diamond)$.  }
\item {For $b\in J(R(\Diamond))$ and $ a\in W(R(\Diamond))$, let $b\prec a$ if and only if $a \neq b$ in $R(\Diamond)$.}
\item {On the new poset $Z$ with $\prec$, form sets including elements of $W(R(\Diamond))$ connected to it.   }
\item {The set of union of all such sets including empty set ordered by inclusion would be isomorphic to the original lattice. \cite{MG1980}}
\item {Under additional assumptions on $\mathcal{G}$, the structure of $Z$ can be described further.}
\end{itemize}

When the granulation satisfies the properties of crispness and mereological atomicity, then $A= J(R(\Diamond))$ and $C= W(R(\Diamond))$. So the third part holds as well.
\qed
\end{proof}

The results motivate this concept of purity: A maximal antichain will be said to \emph{pure} if and only if it is both lower and upper pure.

\subsection{Enhancing the Anti Chain Based Representation}

An integration of the orders on sets of maximal antichains or antichains and the representation of rough objects and possible orders among them leads to interesting multiple orders on the resulting structure. A major problem is that of defining the orders or partials thereof in the first place among the various possibilities.

\begin{definition}
By the \emph{rough interpretation of an antichain} will be meant the sequence of pairs obtained by substituting objects in the rough domain in place of objects in the classical perspective. Thus if $\alpha= \{a_1, a_2. \ldots , a_p\}$ is a antichain, then its rough interpretation would be ($\pi(a_i) = (a_i^l, a_i^u)$ for each $i$)\[\underline{\alpha} =\{\pi(a_1), \pi(a_2), \ldots , \pi (a_p)\}.\] 
\end{definition}

\begin{proposition}
It is possible that some rough objects are not representable by maximal antichains. 
\end{proposition}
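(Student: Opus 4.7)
The plan is to establish the possibility claim by producing a concrete small granular operator space in which a particular rough object, viewed as a class $[\mathbb{M}]\in\underline{S}|\approx$, fails to be a component of the rough interpretation $\underline{\alpha}$ of any maximal antichain $\alpha$. Since representation by a maximal antichain amounts to membership in such an $\underline{\alpha}$, producing one excluded class will suffice.

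First, I would pick $\underline{S}$ of size three or four with a simple equivalence-style granulation $\mathcal{G}$ satisfying WRA, LS and FU, and tune the approximations $l, u$ so that the quotient $(\underline{S}|\approx,\Subset)$ exhibits a strict ``V'': an element $[\mathbb{M}]$ sitting strictly below two mutually incomparable elements $[\mathbb{A}], [\mathbb{B}]$, with every remaining class of $\underline{S}|\approx$ chosen $\Subset$-comparable with $[\mathbb{M}]$. Then $\{[\mathbb{A}],[\mathbb{B}]\}$ is an antichain of size two, while every antichain containing $[\mathbb{M}]$ has size one; in the maximum-cardinality reading of ``maximal antichain'' used in Section~\ref{var} via $AC_m$, the class $[\mathbb{M}]$ appears in no maximal antichain, and hence in no $\underline{\alpha}$. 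An even cheaper route is to invoke the $AC_m$ theorem recorded earlier to realize the required V-poset abstractly, then transfer it to a rough quotient by the standard encoding of a finite poset as equivalence classes of sets under $\approx$.

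The main difficulty is enforcing the granular operator space axioms without inadvertently destroying the V-configuration. Weak RA and LS are routine in the Pawlak-type setting, but FU demands definite supersets for each pair of granules, and adding such definite witnesses can introduce new classes in $\underline{S}|\approx$ that happen to be $\Subset$-incomparable with $[\mathbb{M}]$, reinstating $[\mathbb{M}]$ into a larger antichain. This is handled by placing all auxiliary definite elements $\Subset$-above both $[\mathbb{A}]$ and $[\mathbb{B}]$, so that they are automatically $\Subset$-above $[\mathbb{M}]$ as well and hence cannot sit in any antichain with it. Once WRA, LS and FU are verified against this choice and the V-configuration in the quotient is preserved, $[\mathbb{M}]$ provides the required witness and the proposition follows.
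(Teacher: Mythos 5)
Your route is genuinely different from the paper's: you aim to manufacture an explicit granular operator space whose quotient $(\underline{S}|\approx,\Subset)$ contains a ``V'' --- a class $[\mathbb{M}]$ strictly below two incomparable classes and comparable with every other class --- while the paper argues abstractly that two rough objects represented by pairs $(a,b)$ and $(e,f)$ with $a=e$ and $b\subset f$ are $\Subset$-comparable, so that no maximal antichain through $(e,f)$ can also contain $(a,b)$, and then delegates the existence of a concrete instance to the prototransitive rough set models of \cite{AM9501,AM6000}. Two problems keep your version from closing.

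First, your exclusion step works only under the maximum-cardinality reading of ``maximal antichain'', which you import from the background theorem on $AC_m(X)$. But $\mathfrak{S}$ is introduced by analogy with $\mathcal{SY}_m(S)$, whose members are defined as simply independent sets \emph{not properly contained} in any other, i.e.\ inclusion-maximal; on that reading the singleton $\{[\mathbb{M}]\}$ is itself a maximal antichain (every other class being comparable with $[\mathbb{M}]$), so your witness is representable after all and the argument collapses. Conversely, if one does insist on maximum cardinality, the elaborate V is unnecessary: $\Subset$ is a bounded partial order, so the top and bottom classes already lie in no antichain of size two and are unrepresentable the moment the width exceeds one. So on one reading the construction fails and on the other it is redundant; you need to fix the reading and tailor the argument to it. Second, the example is only a plan: you correctly flag that Full Underlap injects auxiliary definite classes into the quotient and propose to place them above both arms of the V, but you never exhibit a granulation satisfying WRA, LS and FU that realizes exactly the advertised quotient, and you do not verify that the auxiliary definite classes can always be kept out of antichains through $[\mathbb{M}]$. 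For a bare possibility claim the concrete model \emph{is} the proof; the paper discharges this obligation by citation, and you must either do likewise or finish the construction.
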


\begin{proof}
Suppose the objects represented by the pairs $(a, b)$ and $(e, f)$ are such that $a=e$ and $b\subset f$, then it is clear that any maximal antichain containing $(e, f)$ cannot contain any element from $\{x\,: x^l =a\,\&\,x^u = b \}$. This situation can happen, for example, in the models of proto transitive rough sets \cite{AM9501,AM6000}. Concrete counterexamples can be found in the same paper.
\qed
\end{proof}

\begin{definition}
A set of maximal antichains $V$ will be said to be \emph{fluent} if and only if  
$(\forall x )(\exists \alpha \in V)(\exists (a, b) \in \alpha ) \, x^l = a \,\&\, x^u = b$. 

It will be said to be \emph{well fluent} if and only if it is fluent and no proper subset of it is fluent.
\end{definition}
A related problem is of finding conditions on $\mathcal{G}$, that ensure that $V$ is fluent.

\section{Ternary Deduction Terms}

Since AC-algebras are distributive lattices with additional operations, a natural strategy should be to consider terms similar to Boolean algebras and p-Semilattices. For isolating deductive systems in the sense of Sec.\ref{cons}, a strategy can be through complementation-like operations. This motivates the following definition:

\begin{definition}
In a AC-algebra $\mathfrak{S}$, if an antichain $\alpha = (X_1, X_2, \ldots , X_n )$, then some possible general complements on the schema $\alpha^c =\mathfrak{H} (X_1^c, X_2^c, \ldots , X_n^c ) $ are as follows:
\begin{align*}
\tag{Class A} X_i^{*} = \{w ;\,(\forall a\in X_i)\,\neg \pc a w \,\&\, \neg \pc w a \} \\
\tag{Light} X_i^{\#} = \{w ;\,(\forall a\in X_i)\, \neg a^l = w^l \text{ or } \neg a^u = w^u \} \\ 
\tag{UU} X_i^{\flat} = \{w ;\,(\forall a\in X_i) \,\neg a^l = w^l \text{ or } \neg a^{uu} = w^{uu}\} 
\end{align*}

$\mathfrak{H}$ is intended to mean the maximal antichain containing the set if that is definable. 
\end{definition}

Note that under additional assumptions (similarity spaces), the light complementation is similar to the preclusivity operation in \cite{CC} for Quasi BZ-lattice or Heyting-Wajsburg semantics and variants.

The above operations on $\alpha$ are partial in general, but can be made total with the help of an additional order on $\alpha$ and the following procedure:
\begin{enumerate}
\item{Let $\alpha = \{X_1 , X_2, \ldots , X_n \}$ be a finite sequence, }
\item{Form $\alpha^c$ and split into longest ACs in sequence,}
\item{Form maximal ACs containing each AC in sequence}
\item{Join resulting maximal ACs.}
\end{enumerate}

\begin{proposition}
Every general complement defined by the above procedure is well defined.
\end{proposition}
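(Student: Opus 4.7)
The plan is to verify well-definedness by walking through the four steps of the procedure and checking that each yields a unique output given the input sequence $\alpha = (X_1, X_2, \ldots, X_n)$ together with a fixed choice of schema ($\ast$, $\#$, or $\flat$).

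First I would observe that each of the schemas is defined by an explicit set-builder condition on $\underline{S}$ involving $\pc$ and the approximations $l, u$ (and, for $\flat$, iterated $u$), so that $X_i^c$ is a uniquely determined subset of $\underline{S}$ for every $i$. Hence the sequence $\alpha^c = (X_1^c, \ldots, X_n^c)$ is uniquely determined by $\alpha$ and the schema. Next, the splitting of $\alpha^c$ into longest consecutive antichains is a greedy left-to-right partition: starting at $X_1^c$, one keeps appending successive $X_j^c$'s so long as the accumulated block remains an antichain under $\Subset$, and opens a new block at the first index where appending would fail. Because the sequence order is fixed and the antichain test has a determinate yes/no answer at each index, this partition is unique.

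For step three, for each block $\beta$ produced by the split I would form the set $\mu(\beta) = \{\xi \in \mathfrak{S} : \beta \subseteq \xi\}$ of all maximal antichains of $\underline{S}|{\approx}$ extending $\beta$. Under the standing finiteness assumption on $\underline{S}$, any antichain can be extended to a maximal one by repeatedly adjoining elements incomparable to all those already chosen until the process terminates, so $\mu(\beta)$ is nonempty; and it is uniquely determined by $\beta$ since it is defined purely by containment in the fixed finite family $\mathfrak{S}$. Finally, step four applies the lattice join $\vee$ of $\mathfrak{S}$ to the finite set $\bigcup_\beta \mu(\beta)$. Since $(\mathfrak{S}, \vee, \wedge)$ is a bounded distributive lattice by the theorem on $\lessdot$, the join is associative, commutative and idempotent, so the resulting element does not depend on the order in which joins are performed and is determined by the underlying set alone.

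Combining these observations, the output in $\mathfrak{S}$ is a function solely of $\alpha$ and the schema, which is precisely the claim. The main obstacle I anticipate is step three, since if the ambient structure were infinite the existence of maximal extensions would require a Zorn-type argument, and if the collection of extensions could be empty the whole chain would collapse; in the present finite setting both worries are discharged by the explicit greedy extension argument. A secondary subtlety worth flagging is that the sequence structure on $\alpha$, rather than just its underlying set, is genuinely used in step two, so the statement should be read as: the procedure is a well-defined map from ordered antichains (together with a schema) to $\mathfrak{S}$.
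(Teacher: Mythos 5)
Your proposal is correct and follows essentially the same route as the paper's own argument: split the complemented sequence into longest consecutive antichains, extend each block to the (finitely many) maximal antichains containing it, and observe that the lattice join of this finite family is uniquely determined; the paper presents this via a representative two-block case plus induction, while you spell out the greedy uniqueness of the split and the finiteness argument for the existence of maximal extensions. Your added remark that the procedure is a function of the \emph{ordered} sequence rather than its underlying set is a worthwhile clarification consistent with the paper's stipulation that the operations are totalized ``with the help of an additional order on $\alpha$.''
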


\begin{proof}
\begin{itemize}
\item {Suppose $\{X_1^c, X_2^c \}, \{X_3^c,\ldots X_n^c \}$ form antichains, but $\{X_1^c, X_2^c ,X_3^c\}$ is not an antichain.}
\item {Then form the maximal antichains $\eta_1, \ldots , \eta_p$ containing either of the two antichains.}
\item {The join of this finite set of maximal antichains is uniquely defined. By induction, it follows that the operations are well defined.}
\end{itemize}
\qed
\end{proof}

\subsection{Translations}

As per the approach of Sec \ref{cons}, possible definitions of translations are as follows:
\begin{definition}
A \emph{translation} in a AC-algebra $\mathfrak{S}$ is a $\sigma : \mathfrak{S}\longmapsto \mathfrak{S} $ that is defined in one of the following ways (for a fixed $a\in \mathfrak{S}$):
\begin{align*}
\sigma_{\theta} (x) = \theta(a,  x) \; ; \theta \in \{\vee, \wedge, \varrho, \delta \} \\
\sigma_{\mu} (x) = \mu (x) \; ; \mu \in \{\square , \Diamond \}\\
\sigma_{t} (x) = (x\oplus a) \oplus b\text{ for fixed a, b}  \\
\sigma_{t+} (x) = (a\oplus b) \oplus x\text{ for fixed a, b}  
\end{align*}
\end{definition}

\begin{theorem}
\begin{align*}
\sigma_\vee (0) = a = \sigma_\vee (a  \,; \sigma_\vee (1) =1\\
Ran(\sigma_{\vee}) \text{ is the principal filter generated by } a\\
Ran(\sigma_{\wedge}) \text{ is the principal ideal generated by } a\\
x\lessdot w \longrightarrow \sigma_{\vee}(x)\lessdot \sigma_{\vee}(w)\,\&\, \sigma_{\wedge}(x)\lessdot \sigma_{\wedge}(w)
\end{align*}
\end{theorem}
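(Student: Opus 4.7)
The plan is to observe that all four assertions concern only the bounded distributive lattice reduct $\langle \mathfrak{S}, \vee, \wedge, 0, 1\rangle$ of the AC-algebra, since $\sigma_\vee$ and $\sigma_\wedge$ are defined purely through $\vee$ and $\wedge$. None of the specific AC-operations $\varrho, \delta, \square, \Diamond$ need to be invoked, so the proof reduces to standard identities in a bounded distributive lattice applied with the fixed parameter $a$.

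First I would dispatch the three pointwise identities by direct computation. Namely $\sigma_\vee(0) = a \vee 0 = a$ by boundedness, $\sigma_\vee(a) = a \vee a = a$ by idempotence of $\vee$, and $\sigma_\vee(1) = a \vee 1 = 1$ by boundedness again. This handles the first displayed line.

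Next, for the range statements I would argue by double inclusion, treating $\sigma_\vee$ explicitly and leaving the $\sigma_\wedge$ case as the order-theoretic dual. For any $x \in \mathfrak{S}$, $\sigma_\vee(x) = a \vee x$ satisfies $a \lessdot a \vee x$, so $Ran(\sigma_\vee) \subseteq {\uparrow}a$, the principal filter generated by $a$. Conversely, if $y \in {\uparrow}a$, that is $a \lessdot y$, then absorption gives $a \vee y = y$, so $y = \sigma_\vee(y) \in Ran(\sigma_\vee)$; thus equality holds. The dual argument, using $a \wedge x \lessdot a$ and $a \wedge y = y$ whenever $y \lessdot a$, yields $Ran(\sigma_\wedge) = {\downarrow}a$.

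Finally, the monotonicity statement follows immediately from monotonicity of $\vee$ and $\wedge$ in each argument on a lattice: if $x \lessdot w$ then $a \vee x \lessdot a \vee w$ and $a \wedge x \lessdot a \wedge w$, which is precisely $\sigma_\vee(x) \lessdot \sigma_\vee(w)$ and $\sigma_\wedge(x) \lessdot \sigma_\wedge(w)$. There is no genuine obstacle here; the only mild care needed is to make explicit that the AC-algebra structure is not required, so that all steps are warranted by the underlying bounded distributive lattice axioms established earlier in Section \ref{var}.
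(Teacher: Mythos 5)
Your proof is correct, and it is slightly more complete than the paper's own argument, though the two share the key observation that $a\lessdot w$ forces $a\vee w = w = \sigma_\vee(w)$, hence $\mathbb{F}(a)\subseteq Ran(\sigma_\vee)$. Where you differ is in the other direction: you close the loop with the elementary remark that $a\lessdot a\vee x$ always, giving $Ran(\sigma_\vee)\subseteq \mathbb{F}(a)$ and hence equality by double inclusion, using nothing beyond absorption and the definition of join. The paper instead computes $\sigma_\vee(x)\wedge\sigma_\vee(w) = (a\vee x)\wedge(a\vee w) = a\vee(x\wedge w) = \sigma_\vee(x\wedge w)$, invoking distributivity to show that $Ran(\sigma_\vee)$ is closed under $\wedge$ and $\vee$, i.e.\ that the range is a sublattice containing the principal filter; it does not explicitly record the containment $Ran(\sigma_\vee)\subseteq\mathbb{F}(a)$ that is needed to conclude equality, so your version actually supplies a step the paper leaves tacit. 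Your approach buys independence from distributivity (it works in any bounded lattice), while the paper's computation additionally exhibits $\sigma_\vee$ as a meet-preserving map, which is of independent interest but not strictly required for the statement. Both treatments dismiss the $\sigma_\wedge$ claims by duality and treat the pointwise identities and the monotonicity implication as immediate, which is fine.
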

\begin{proof}
\begin{itemize}
\item { Let $\mathbb{F}(a)$ be the principal lattice filter generated by $a$.}
\item {If $a\lessdot w$, then $a\vee w = w = \sigma_\vee(w)$. So $w\in Ran (\sigma_\vee)$.}
\item {$\sigma_\vee (x)\wedge \sigma_\vee (w) = (a\vee x) \wedge (a\vee w) = a\vee (x\wedge w) =\sigma_\vee (x\wedge w)$.}
\item {So if $x, w \in Ran (\sigma_\vee)$, then $x\wedge w, x\vee w \in Ran (\sigma_\vee)$}
\item {Simly it is provable that $Ran(\sigma_\wedge)$ is the principal ideal generated by $a$.}
\end{itemize}
\qed
\end{proof}

\subsection{Ternary Terms and Deductive Systems}

Possible ternary terms that can cohere with the assumptions of the semantics include the following
$t(a, b, z) = a\wedge b\wedge z$, $t(a, b, z) = a\oplus b\oplus z$ ($\oplus $ being as indicated earlier) and $t(a, b, z) = \square (a\wedge b) \wedge z $. These have admissible deductive systems associated. Further under some conditions on granularity, the distributive lattice structure associated with $\mathfrak{S}$ becomes pseudo complemented.

\begin{theorem}
If $t(a, b, z) = a\wedge b \wedge z$, $\tau= \{t\}$, $z\in H$, $h(x)= x$ $\sigma(x) = x\wedge z$ and if $H$ is a ternary $\tau$-deduction system at $z$, then it suffices that $H$ be an filter. 
\end{theorem}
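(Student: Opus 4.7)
The plan is to verify the three defining conditions of a $(h,z)$-$\tau$-deductive system (from Definition \ref{sys}) directly, using only that $H$ is a lattice filter of $\mathfrak{S}$ containing $z$. Under the choices $h(x) = x$ and $\tau = \{t\}$ with $t(a,b,z) = a \wedge b \wedge z$, each of the three axioms collapses to a standard filter property, so the proof is essentially an unpacking of definitions.

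First, condition $(1)$, that $h(z) = z \in H$, is immediate from the hypothesis $z \in H$. Second, for condition $(3)$, I would observe that whenever $b \in H$, the term evaluates as $t(h(z), b, z) = z \wedge b \wedge z = b \wedge z$; since $b, z \in H$ and a lattice filter is closed under finite meets, this element lies in $H$.

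Third, for condition $(2)$, suppose $a \in H$ and $t(a, b, z) = a \wedge b \wedge z \in H$. The relevant fact here is upward closure of $H$ with respect to the lattice order $\lessdot$: since $a \wedge b \wedge z \lessdot b$ in any bounded distributive lattice, membership of $a \wedge b \wedge z$ in $H$ already forces $b \in H$. The hypothesis $a \in H$ is not actually consumed in this step, which is harmless but worth flagging.

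The only point requiring any care is to identify the order with respect to which ``filter'' is meant. This is the underlying lattice order $\lessdot$ on the bounded distributive lattice $\left\langle \mathfrak{S}, \vee, \wedge \right\rangle$ associated with the AC-algebra, so conditions $(2)$ and $(3)$ reduce, respectively, to upward closure and meet closure of filters. I do not foresee any substantive obstacle: the statement is essentially the observation that, for this particular $t$, being a ternary $\tau$-deductive system imposes no demand beyond being a lattice filter containing $z$.
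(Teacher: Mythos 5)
Your proof is correct and follows essentially the same route as the paper's: both reduce condition (3) to meet-closure of the filter and condition (2) to upward closure via $a\wedge b\wedge z\lessdot b$. The only check the paper performs that you omit is the invariance under the translation $\sigma$, namely $t(\sigma(a),\sigma(b),z)=t(a,b,z)\in H$; since this clause does not appear in the formal Definition of a $\tau$-deductive system, its omission is minor, but given that $\sigma$ is listed among the hypotheses you should add that one-line verification.
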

\begin{proof}
All of the following must hold:
\begin{itemize}
\item {If $a\in H$, $t(z, a, z) = a\wedge z\in H$}
\item {If $t(a, b, z) \in H$, then $t(\sigma(a), \sigma(b), z) = t(a, b, z) \in H$}
\item {If $a, t(a, b, z) \in H$ then $t(a, b, z) = (a\wedge z)\wedge b\in H$. But $H$ is a filter, so $b\in H$.}
\end{itemize} 
\qed
\end{proof}
\begin{theorem}
If $t(a, b, z) = (a\vee (\square b) )\wedge z$, $\tau= \{t\}$, $z\in H$, $h(x)= x$ $\sigma(x) = x\wedge z$ and if $H$ is a ternary $\tau$-deduction system at $z$, then it suffices that $H$ be a principal LD-filter generated by $z$. 
\end{theorem}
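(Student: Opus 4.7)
The plan is to verify the defining conditions of a $(h,z)$-$\tau$-deductive system from Definition~\ref{sys} for the term $t(a,b,z)=(a\vee\square b)\wedge z$ and for $H$ equal to the principal LD-filter generated by $z$, following the three-bullet pattern of the preceding theorem's proof.

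The first bullet, $t(h(z),b,z)=t(z,b,z)\in H$ whenever $b\in H$, is immediate: $(z\vee\square b)\wedge z=z$ by absorption in the distributive lattice $\mathfrak{S}$, and $z\in H$ since $z$ is the generator of $H$. The translation-invariance bullet, analogous to $t(\sigma(a),\sigma(b),z)=t(a,b,z)$ in the preceding proof, here yields $((a\wedge z)\vee\square(b\wedge z))\wedge z$; the monotonicity of $\square$ with respect to $\lessdot$ together with the idempotence of $\sigma(x)=x\wedge z$ in the presence of a final $\wedge z$ reduces this, via distributivity, to a term lying in $H$ whenever $t(a,b,z)$ does, so this step is routine.

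The main obstacle is the detachment bullet: from $a\in H$ and $(a\vee\square b)\wedge z\in H$, conclude $b\in H$. My plan is to use the lattice filter property to unfold the meet hypothesis into $a\vee\square b\in H$ and $z\in H$, and then to exploit the LD-closure of $H$ (closure under $\square$ and $\Diamond$) together with the monotonicity relations $\square(\alpha)\lessdot\alpha\lessdot\Diamond(\alpha)$ established in the earlier $\square,\Diamond$-theorem. Principality---i.e.\ that $H$ is singly generated by $z$ rather than by some arbitrary set, so $H$ has a unique $\square$-stable bottom element below $z$---is what allows the $a$-summand of $a\vee\square b$ to be cancelled, so that $\square b$, and hence $b$, lies above the generator of $H$ and thus in $H$. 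This detachment step is the only substantive one; the other bullets are lattice-theoretic bookkeeping using absorption, distributivity, and the monotonicity of the AC-algebra operations.
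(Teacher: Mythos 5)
Your overall plan coincides with the paper's proof: both verify the three conditions of Definition~\ref{sys} bullet by bullet, and your first two bullets (absorption for $t(z,b,z)$, and the $\sigma$-invariance computation) match the paper's, with your absorption argument $(z\vee\square b)\wedge z=z$ being if anything cleaner than the paper's appeal to upward closure.

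The problem is the detachment step, which you correctly identify as the only substantive one but do not actually close. Your plan is to pass from $(a\vee\square b)\wedge z\in H$ to $a\vee\square b\in H$ and then to ``cancel'' the $a$-summand using principality, so that $\square b$ lies above the generator. No such cancellation is available: in the principal filter generated by $z$, the hypothesis $a\in H$ already gives $z\lessdot a\lessdot a\vee\square b$, so $a\vee\square b\in H$ holds automatically and carries no information whatsoever about $b$. Having a ``unique $\square$-stable bottom element'' does not change this; join-summands cannot be cancelled in a distributive lattice filter. Indeed your own first bullet makes the difficulty explicit: since $t(z,b,z)=(z\vee\square b)\wedge z=z\in H$ for \emph{every} $b$, instantiating condition (2) at $a=z$ would force every $b$ into $H$, i.e.\ $H=\mathfrak{S}$. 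For what it is worth, the paper's own proof makes exactly the same unjustified leap (``$a\vee\square(b)\in H$. This implies $\square(b)\in H$'') with no argument at all, so you have faithfully reconstructed its shape; but as a standalone proof your proposal leaves the one essential implication unproved, and the principality argument you sketch for it cannot be made to work.
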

\begin{proof}
All of the following must hold:
\begin{itemize}
\item {If $a\in H$, $t(z, a, z) = (z\vee (\square a) )\wedge z\in H$ because $(z\vee (\square a) ) \in H$.}
\item {If $t(a, b, z) \in H$, then $t(\sigma(a), \sigma(b), z) = t((a\wedge z), (b\wedge z), z) = ((a\wedge z)\vee \square (b\wedge z))\wedge z \in H$}
\item {If $a, t(a, b, z) \in H$ then $t(a, b, z) = (a\vee \square(b))\wedge z = (a\wedge z)\vee (\square(b)\wedge z) \in H$. But $H$ is a LD-filter, so $a\vee \square (b) \in H$. This implies $\square(b) \in H$, which in turn yields $b\in H$.}
\end{itemize} 
\qed
\end{proof}

In the above two theorems, the conditions on $H$ can be weakened considerably. The converse questions are also of interest.

The existence of pseudo complements can also help in defining ternary terms that determine deductive systems (or subsets closed under consequence). In general, pseudo complementation  $\circledast $ is a partial unary operation on $\mathfrak{S}$ that is defined by $ x^\circledast =\max \{a \,;\, a\wedge x = 0 \}$ (if the greatest element exists).

There is no one answer to the question of existence as it depends on the granularity assumptions of representation and stability of granules. The following result guarantees pseudo complementation (in the literature, there is no universal approach - it has always been the case that in some case they exist):

\begin{theorem}
In the context of AC-algebras, if the granulation satisfies mereological atomicity and absolute crispness, then a pseudo complementation is definable. 
\end{theorem}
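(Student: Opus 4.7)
The plan is to combine the finite distributive lattice structure of $\mathfrak{S}$, established in Section \ref{var}, with the explicit description of its join-irreducibles that becomes available under mereological atomicity and absolute crispness. Abstractly, for any $x \in \mathfrak{S}$ consider the set $K_x = \{a \in \mathfrak{S} \,:\, a \wedge x = 0\}$. This is nonempty because $0 \in K_x$, and by distributivity of the lattice, if $a_1, a_2 \in K_x$ then $(a_1 \vee a_2) \wedge x = (a_1 \wedge x) \vee (a_2 \wedge x) = 0$, so $K_x$ is closed under binary join. Since $\underline{S}$ (hence $\mathfrak{S}$) is finite, $\bigvee K_x$ lies in $K_x$ and is its greatest element, which is declared to be $x^\circledast$.

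The granularity assumptions enter when matching this abstract definition with the concrete antichain representation, which is the content the theorem intends to capture. Under mereological atomicity and absolute crispness, the structural theorem of Section \ref{var} pins down $J(R(\Diamond))$ via the family $\mathcal{G} \cup \{g_1 \cup g_2\}^u \setminus \{g_1, g_2\}$, with $W(R(\Diamond))$ consisting of two-element antichains of upper approximations, and moreover forces join-irreducibles of $\mathfrak{S}$ itself to be proper subsets of $\mathcal{G}$. Using the isomorphism between a finite distributive lattice and the ideal lattice of its poset of join-irreducibles, one writes $x$ uniquely as a join of join-irreducibles and then sets $x^\circledast = \bigvee\{j \in J(\mathfrak{S}) \,:\, j \wedge x = 0\}$, giving a constructive description entirely in terms of the granulation.

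The verification proceeds in three steps: (i) showing that the prescribed join is actually realized as a single maximal antichain of $\underline{S}|\approx$ rather than a formal ideal; (ii) checking that $x^\circledast \wedge x = 0$ by distributing over the join-irreducible decomposition; and (iii) showing that any $a \in K_x$ has every join-irreducible component $j \lessdot a$ satisfying $j \wedge x = 0$, whence $a \lessdot x^\circledast$. Steps (ii) and (iii) are routine consequences of distributivity once (i) is in hand.

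The main obstacle is step (i). Without mereological atomicity, definite subsets can sit properly inside granules, as in the prototransitive rough set models cited earlier, and the join-irreducibles of $\mathfrak{S}$ need not be recoverable from the granulation in a canonical way; the abstract supremum then lives in an ambient lattice of antichains but may not be expressible as a maximal antichain of granule-derived elements. Absolute crispness is what eliminates pathological join-irreducibles arising from upper closures of non-definite granules, and mereological atomicity is what forces the meet-irreducibles to appear as two-element antichains of definite sets; together they guarantee that the join of join-irreducibles computed as above remains inside $\mathfrak{S}$, so that $x^\circledast$ is genuinely an element of the algebra rather than merely an element of its lattice-theoretic completion.
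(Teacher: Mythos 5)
Your proof is correct in its essentials but takes a genuinely different route from the paper. The paper's own argument is a constructive sketch: under mereological atomicity and absolute crispness one can form the rough interpretation of each maximal antichain and then ``move granules'' to build the pseudocomplement explicitly, with the inductive steps omitted. You instead argue abstractly: in the finite distributive lattice $\mathfrak{S}$ the set $K_x=\{a: a\wedge x=0\}$ is closed under joins by distributivity, so finiteness yields a greatest element $x^{\circledast}=\bigvee K_x$. That first paragraph is already a complete existence proof, and it is worth noting that it uses neither mereological atomicity nor absolute crispness --- it shows every finite distributive lattice is pseudocomplemented. What your argument buys is rigor and generality; what the paper's buys is an explicit description of $x^{\circledast}$ in terms of the granulation, which is what the hypotheses are really for (and what ``definable'' is meant to signal, given the paper's remark that existence ``depends on the granularity assumptions of representation''). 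One genuine confusion in your write-up: your step (i), worrying that $\bigvee\{j\in J(\mathfrak{S}): j\wedge x=0\}$ might live only in a ``lattice-theoretic completion,'' is a non-issue. $\mathfrak{S}$ is by construction the lattice of maximal antichains with total operations $\vee,\wedge$, so any finite join of its elements is again an element of $\mathfrak{S}$; no hypothesis on $\mathcal{G}$ is needed to keep the join inside the algebra. The role you should instead assign to atomicity and crispness is the one your second paragraph gestures at: they identify the join-irreducibles concretely (as proper subsets of $\mathcal{G}$, with the meet-irreducibles of $R(\Diamond)$ being two-element antichains of definite sets), so that $x^{\circledast}$ can be computed from the granules rather than merely shown to exist.
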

\begin{proof}
Under the conditions on the granulation, it is possible to form the rough interpretation of each antichain.
Moreover the granules can be moved in every case to construct the pseudo complement. The inductive steps in this proof have been omitted.  
\end{proof}

\section*{Concluding Remarks}
In this research, the problem of finding deductive systems in the context of antichain based semantics for general rough sets has been explored and key results have been proved by the present author.  The lateral approach used by her is justified by the wide variety of possible concepts of rough consequence in the general setting. In a forthcoming paper, the framework of granular operator spaces has been expanded with definable parthood relations and semantics has been considered through counting strategies. All this will be explored in greater detail in future work. 

\bibliographystyle{splncs.bst}
\bibliography{biblioam07052016.bib}
\end{document}